\newtheorem{theorem}{Theorem}
\newtheorem{lemma}[theorem]{Lemma}
\newtheorem{proposition}[theorem]{Proposition}
\newtheorem{definition}[theorem]{Definition}
\newtheorem{example}[theorem]{Example}
\newtheorem{remark}[theorem]{Remark}
\newtheorem{convention}[theorem]{Convention}
\newenvironment{proof}{\noindent{\em Proof:}}{$\Box$~\\}
\setlist{nosep} % or \setlist{noitemsep} to leave space around whole list
\providecommand{\keywords}[1]{\textbf{\textit{Keywords---}} #1}
\newcommand{\bba}{\textsf{BBA}\xspace}
\newcommand{\nfn}{\textsf{NF}\xspace}
\newcommand{\magma}{\textsc{Magma}\xspace}
\newcommand{\macaulay}{\textsc{Macaulay2}\xspace}
\newcommand{\singular}{\textsc{Singular}\xspace}
\newcommand{\gbl}{\textsc{GB}\xspace}
\newcommand*{\defeq}{\mathrel{\vcenter{\baselineskip0.5ex \lineskiplimit0pt
                     \hbox{\scriptsize.}\hbox{\scriptsize.}}}%
                     =}
\DeclareMathOperator{\headSym}{lt}
\DeclareMathOperator{\headmSym}{lm}
\DeclareMathOperator{\lcm}{lcm}
\newcommand{\lc}[1]{\mathrm{lc}\left({#1}\right)}
\newcommand{\hd}[1]{\headSym\left({#1}\right)}
\newcommand{\hm}[1]{\headmSym\left({#1}\right)}
\newcommand{\ecart}[1]{\ensuremath{\mathrm{ecart}\left({#1}\right)}}
\newcommand{\spoly}[2]{\ensuremath{\mathrm{spoly}\left({#1},{#2}\right)}}
\newcommand{\gpoly}[2]{\ensuremath{\mathrm{gpoly}\left({#1},{#2}\right)}}
\newcommand{\nf}[2]{\ensuremath{\text{\sffamily{NF}}\left({#1},{#2}\right)}}
\newcommand{\er}{\ensuremath{\mathcal{R}}\xspace}
\newcommand{\epr}{\ensuremath{\mathcal{P}}\xspace}
\newcommand{\numberGenerators}{m}
\newcommand{\ip}[1]{\left({#1}_1,\ldots,{#1}_\numberGenerators\right)}
\newcommand{\module}{\ensuremath{\epr^\numberGenerators}\xspace}
\newcommand{\const}{a}
\newcommand{\proj}[1]{\overline{#1}}
\newcommand{\mbasis}[1]{\boldsymbol{e}_{#1}}
\newcommand{\syz}[1]{\mathrm{syz}\left({#1}\right)}
\newcommand{\ltr}{\ensuremath{\headSym}-reduction\xspace}
\newcommand{\ltrs}{\ensuremath{\headSym}-reductions\xspace}
\newcommand{\lcr}{\ensuremath{\mathrm{lc}}-reduction\xspace}
\newcommand{\lcrs}{\ensuremath{\mathrm{lc}}-reductions\xspace}
\newcommand{\spt}{S-poly\-no\-mial\xspace}
\newcommand{\spts}{S-poly\-no\-mials\xspace}
\newcommand{\gpt}{GCD-poly\-no\-mial\xspace}
\newcommand{\gpts}{GCD-poly\-no\-mials\xspace}
\newcommand{\stb}{standard basis\xspace}
\newcommand{\stbs}{standard bases\xspace}
\newcommand{\gb}{Gr\"obner basis\xspace}
\newcommand{\gbs}{Gr\"obner bases\xspace}
\newcommand{\rpc}{{\sffamily RPC}\xspace}
\newcommand{\sbba}{{\sffamily sBBA}\xspace}
\newcommand{\F}{\mathbb{F}}
\newcommand{\Q}{\mathbb{Q}}
\newcommand{\Z}{\mathbb{Z}}
\newcommand{\N}{\mathbb{N}}
\definecolor{mygreend}{HTML}{2ca92c}
\definecolor{myredd}{HTML}{c31313}
\definecolor{codegreen}{rgb}{0,0.6,0}
\definecolor{codegray}{rgb}{0.5,0.5,0.5}
\definecolor{codepurple}{rgb}{0.58,0,0.82}
\definecolor{backcolour}{rgb}{0.99,0.99,0.99}
\renewcommand*{\arraystretch}{1.5}
\begin{document}

% \begin{frontmatter}

\title{Standard Bases over Euclidean Domains}

\author{Christian Eder\thanks{ederc@mathematik.uni-kl.de}}
\author{Gerhard Pfister\thanks{pfister@mathematik.uni-kl.de}}
\author{Adrian Popescu\thanks{popescu@mathematik.uni-kl.de}}
\affil{TU Kaiserslautern\\Department of Mathematics\\D-67663 Kaiserslautern}

%\maketitle
% \end{frontmatter}

% \copyrightyear{2018}
% \acmYear{2018}
% \setcopyright{acmlicensed}
% \acmConference{ISSAC '18}{}{July 16--19, 2018, New York, USA}
% \acmPrice{15.00}
% \acmDOI{http://dx.doi.org/10.1145/XXXXXXX.XXXXXXX}
% \acmISBN{978-1-4503-5064-8/17/07}
% %Authors, replace the red X's with your assigned DOI string. See pdf attached to ACM rightsreview confirmation email.
%
% \fancyhead{}
% \settopmatter{printacmref=false, printfolios=false}
% % author add these codes to save space
% % removes the ACM Reference Format section
% \title{Standard Bases over the Integers}
% %\numberofauthors{3}
% \author{Christian Eder, Gerhard Pfister, Adrian Popescu}
% \email{{ederc, pfister, popescu}@mathematik.uni-kl.de}
% \affiliation{%
% \institution{University of Kaiserslautern}
% \department{Department of Mathematics}
% \streetaddress{Erwin-Schr\"odinger-Str.}
% \city{Kaiserslautern}
% \postcode{67663}
% \country{Germany}
% }

% \keywords{Gr\"obner bases, Standard Bases, Euclidean Rings, Local orders}
 \maketitle

\begin{abstract}
In this paper we state and explain techniques useful for the computation of
strong Gr\"obner and standard bases over Euclidean domains: First we investigate
several strategies for creating the pair set using an idea by Lichtblau.
Then we explain methods for avoiding coefficient growth using syzygies.
We give an in-depth discussion on normal form computation resp. a generalized reduction
process with many optimizations to further avoid large coefficients. These are
combined with methods to reach \gpts at an earlier stage of the computation.
Based on various examples we show that our new implementation in the
computer algebra system \singular is, in general, more efficient than other
known implementations.
\end{abstract}

\keywords{
Gr\"obner bases, Standard Bases, Euclidean Domains, Algorithms
}

% \listoftodos[{\bf List of Todos}]\relax
\section{Introduction}
\label{sec:intro} In $1964$ Hironaka already investigated
computational approaches towards singularities and introduced the notion of
\stbs for local monomial orders,\footnote{See Definition~\ref{def:orders}.} see, for
example,~\cite{hironaka11964, hironaka21964, grauert1972}.
In~\cite{bGroebner1965, bGroebner1965eng}, Buchberger initiated, in $1965$, the theory
of \gbs for global monomial orders by which many fundamental problems in mathematics, science
and engineering can be solved algorithmically. Specifically, he introduced some key
structural theory, and based on
this theory, proposed the first algorithm for computing \gbs.
Buchberger's algorithm introduced the concept of critical pairs and repeatedly carries out a certain
polynomial operation (called reduction).

Many of those reductions would be
determined as ``useless'' (i.e. no contribution to the output of the algorithm),
but only a posteriori, that is, after an (often expensive) reduction process.
Thus intensive research was carried out, starting with Buchberger, to avoid the
useless reductions via a priori criteria, see,  for
example,~\cite{bGroebnerCriterion1979,buchberger2ndCriterion1985,gmInstallation1988}.

Once the underlying structure is no longer a field, one needs
the notion of strong \gbs resp. strong \stbs. Influential work was done
by~\cite{kapur1988}, introducing the first generalization of Buchberger's
algorithm over Euclidean domains computing strong \gbs. Since then only a few
optimizations has been introduced, see, for example,~\cite{Wienand2011,
    lichtblau2012, eppSigZ2017}.

In this paper we introduce several new optimizations to the computation of
strong \stbs over Euclidean domains. In Section~\ref{sec:notation} we give the
basic notation and introduce the idea of a reduction step, generalized from the
field case. We state
Buchberger's algorithm over Euclidean domains for global and also for local
monomial orders. Section~\ref{sec:pairs} discusses different variants of how to
handle \spts and \gpts, especially generalized variants of Buchberger's product
and chain criterion. Over Euclidean domains like the integers, coefficient swell
and the missing normalization of the lead coefficient play an important role when it
comes to practical and efficient computation. Modular computation are not
possible in general, but we give a new attempt for keeping coefficients small
in Section~\ref{sec:coefficients}. In Section~\ref{sec:nf} we finally give an
in-depth discussion on the normal form computation which provides various attempts
like lead term reductions and lead coefficient reductions. This, again, helps to keep coefficients small and
minimizes the number of polynomials in a basis which have the same
leading monomial by applying efficient gcd computation.
We have done a new implementation of Buchberger's algorithm in the computer
algebra system \singular. In Section~\ref{sec:results} we compare our implementation
with \macaulay and \magma, exploring the impact of the above ideas with
some interesting results.

\section{Basic notations}
\label{sec:notation}
% We use notation corresponding to~\cite{eder-faugere-2016}:
Let \er be a Euclidean domain without zero divisors.\footnote{The reader can
    feel fre to think of $\er = \Z$.}
% Besides Section~\ref{sec:finite-rings} we assume that \er has no
% zero divisors.
A \emph{polynomial}
in $n$ variables $x_1,\ldots,x_n$ over \er is a finite \er-linear
combination of \emph{terms} $\const_{v_1,\ldots,v_n} \prod_{i=1}^n
x_i^{v_i}$,
$f=\sum_{v}\const_v x^v \defeq
\sum_{v\in\N^n}^{\text{finite}}\const_{v_1,\ldots,v_n} \prod_{i=1}^n
x_i^{v_i}$,
such that $v \in \N^n$ and $\const_v \in \er$.
The \emph{polynomial ring} $\epr \defeq \er[x] \defeq \er[x_1,\ldots,x_n]$
in $n$ variables over $\er$ is the set of all polynomials over $\er$
together with the usual addition and multiplication. For $f=\sum_{v}\const_v
x^v \neq 0 \in\epr$ we define \emph{the degree of $f$} by $\deg(f) :=
\max\left\{v_1+\cdots +v_n \mid \const_v \neq 0\right\}$. For $f=0$ we set
$\deg(f):=
-1$.
% For $m>0$ we denote by $\module$ a \emph{free $\er$-module} and let $\mgen$ be
% the standard basis (unit vectors) in $\module$.
% A \emph{module element} $\alpha\in \module$ can be written as a finite sum
% $\alpha=\sum_{a \in \epr}^{\text{finite}}a\mbasis i$.
% The elements $a\mbasis i$ are the \emph{module terms} of
% $\alpha$. A \emph{module monomial} is an element of $\module$ with exactly
% one term. A module monomial with coefficient 1 is
% \emph{monic}. Neither module monomials nor terms of module elements
% are necessarily monic. Let $\alpha\simeq\beta$ for
% $\alpha,\beta\in \module$ if $\alpha=\const \beta$ for some non-zero $\const
% \in\er$.
%

Let $\ip f \in \epr$ be a finite sequence of polynomials. We define a module
homomorphism $\pi : \module \rightarrow \epr$ by $\mbasis i \mapsto f_i$ for
all $1\leq i \leq \numberGenerators$. We use the shorthand notation $\proj\alpha
\defeq \pi(\alpha) \in \epr$ for $\alpha \in \module$. An element $\alpha \in
\module$ with $\proj\alpha = 0$ is called a \emph{syzygy}. The module of all
syzygies (of $\langle f_1,\ldots, f_m\rangle$) is denoted $\syz{\langle
  f_1,\ldots, f_m\rangle}$.

In the following we discuss computation with respect to different monomial orders:
\begin{definition}
\label{def:orders}
Let $<$ denote a monomial order on \epr. 
\begin{enumerate}
\item $<$ is called \emph{global} if $x^\alpha \geq 1$ for all $\alpha \in \N^n$.
\item $<$ is called \emph{local} if $x^\alpha \leq 1$ for all $\alpha \in \N^n$.
\item Moreover, we call $<$ \emph{mixed} if there exist $\alpha, \beta \in \N^n$ such
that $x^\alpha \leq 1 \leq x^\beta$.
\end{enumerate}
\end{definition}
% \begin{convention}
% In this paper we restrict ourselves to the \emph{po\-si\-tion-over-term order} on
% $\module$: Let $<$ be a monomial order on $\epr$ and let $a e_i, b e_j$ be two module
% monomials in $\module$. Then we define $\potl$ via
% $a e_i \potl b e_j \text{ if and only if either }i < j \text{ or }i = j \text{
%   and }a < b.$
% Moreover, if not otherwise noted, we assume $<$ to denote the \emph{degree reverse
%   lexicographical order} on \epr.
% \end{convention}
Given such a monomial order $<$ we can highlight the maximal terms of
elements in \epr w.r.t. $<$: For $f\in \epr\ \backslash\ \{0\}$,
$\hd f$ is the \emph{lead term}, $\hm f$ the \emph{lead monomial}, and $\lc f$
the \emph{lead coefficient} of $f$. For any set $F \subset \epr$ we define
the \emph{lead ideal} $L(F) = \langle \hd f \mid f \in F\rangle$; for an ideal
$I \subset \epr$,\  $L(I)$ is defined as the ideal of lead terms of all elements of
$I$. Moreover, we define the \emph{ecart} of $f$ by $\ecart f := \deg(f) -
\deg\left(\hm f\right)$. 
% For $\alpha \in \module\ \backslash\ \{0\}$ we
% denote the maximal term w.r.t. $<$ by $\sig\alpha$, the \emph{signature of
% $\alpha$}. We also define signatures of polynomials: For $f\in \epr$, $\sig f$
% is \emph{a} signature of $f$, given by the signature of the corresponding module
% element $\alpha\in\module$ mapping to $f$: $\sig f = \sig{\proj\alpha} =
% \sig\alpha$. Note that whereas $\sig\alpha$ is uniquely defined, $\sig f$ is not
% since there exist different module elements with different signatures that map
% to $f$.

Working over a field there are many equivalent definitions of how to obtain a
canonical or normal form when reducing a given polynomial by a Gr\"obner basis $G$.
Working over more general rings these definitions are no longer equivalent and
over Euclidean domains, like the integers, this, in particular, results in the term
of \emph{strong}ness we give a meaning in the following:

Assuming that our coefficient ring \er is an Euclidean domain we can define a total
order $\prec$ using the Euclidean norm $|\cdot|$ of its elements: Let $\const_1, \const_2
\in \er$, then $\const_1 \prec \const_2 \text{ if } |\const_1 | < | \const_2 |.$
%\mid \text{ (ties broken corespondingly)}.\]
For example, for the integers we can use the absolute value and break ties via
sign:
\[0\prec -1 \prec 1 \prec -2 \prec 2 \prec -3 \prec 3 \prec \ldots \]

The reduction process of two polynomials $f$ and $g$ in \epr depends now on the
uniqueness of the minimal remainder in the division algorithm in \er:

\begin{definition}
\label{def:reduction}
Let $f, g \in \epr$ and let $G= \{g_1,\ldots,g_r\} \subset \epr$ be a finite set of
polynomials.
\begin{enumerate}
\item We say that \emph{$g$ top-reduces $f$} if $\hm g \mid \hm f$ and if there exist
$a,b \in \er$ such that $\lc f = a\, \lc g +b$ such
that $a \neq 0$, which coincides with $b \prec \lc f$. The top-reduction of $f$
by $g$ is then given by
\[f - a \frac{\hm f}{\hm g} g.\]
So a top-reduction takes place if the reduced polynomial will have either a smaller lead
mononmial or a smaller lead coefficient.
\item Relaxing the reduction of the lead term to any term of $f$, we say
that \emph{$g$ reduces $f$}. In general, we speak of the reduction of a
polynomial $f$ w.r.t. a finite set $F\subset \epr$.
Let 
\item We say that $f$ has a \emph{weak standard representation} w.r.t. $G$ if
$f = \sum_{i=1}^r h_i g_i$ for some $h_i \in \epr$ such that $\hm f = \hm{h_j
  g_j}$ for some $j \in \{1,\ldots,r\}$.
\item We say that $f$ has a \emph{strong standard representation} w.r.t. $G$ if
$f = \sum_{i=1}^r h_i g_i$ for some $h_i \in \epr$ such that $\hm f = \hm{h_j
  g_j}$ for some $j \in \{1,\ldots,r\}$ and $\hm f > \hm{h_k g_k}$ for all $k
  \neq j$.
\end{enumerate}
\end{definition}

This kind of reduction is equivalent to CP3 from~\cite{kapur1988} and generalizes
Buchberger's attempt from~\cite{buchberger2ndCriterion1985}.

The result of such a reduction might not be unique. This uniqueness is exactly
the property \emph{\stbs} give us. Before defining \stbs, let us give a short note on
the naming convention in this paper:

\begin{convention}
Note that the term \emph{\gb} was introduced by Buchberger in 1965 for bases
w.r.t. a global monomial order (\cite{bGroebner1965, bGroebner1965eng}). Independently, Hironaka
(\cite{hironaka11964, hironaka21964}), and, again
independetly, Grauert (\cite{grauert1972}), developed an analgous
concept, called \emph{\stb}, for multivariate power series, i.e. for polynomial
rings equipped with a local monomial order. For this paper we decided to use
the notion \emph{\stb} since it is nowadays the more general one.
\end{convention}

\begin{definition}
\label{def:strong-sb}
A finite set $G \subset \epr$ is called a \emph{\stb} for an ideal $I$ w.r.t.
a monomial order $<$ if $G \subset I$ and $L(G) = L(I)$. Furthermore, $G$ is called a
\emph{strong \stb}
% \footnote{Note that in the area of \sigb algorithms the notion
% of a \emph{strong \gb} is sometimes also used in a different manner, see, for example,
%    \cite{gvwGVW2015,eder-faugere-2016}.}
   if for any $f \in I\backslash\{0\}$ there exists a $g\in G$
such that $\hd g \mid \hd f$. 
\end{definition}

\begin{remark}
Note that $G$ being a strong \stb is equivalent to all elements $g \in G$ having
a strong standard representation w.r.t. $G$. See, for
example, Theorem~1 in~\cite{lichtblau2012} for a proof.
\end{remark}

Clearly, assuming the field case, any \stb is a strong \stb. But in our setting
with \er being an Euclidean ring one has to check the coefficients, too, as
explained in Definition~\ref{def:reduction}.

\begin{example}
\label{ex:stronggb}
Let $\er = \Z$ and $I= \langle x \rangle \in \er[x]$. Clearly, $G :=
\{2x,3x\}$ is a \stb for $I$: $L(I) = \langle x \rangle$ and $x = 3x-2x \in
L(G)$. But $G$ is not a strong \stb for $I$ since $2x \nmid
x$ and $3x \nmid x$.
\end{example}

In order to compute strong \stbs we need to consider two different types of
special polynomials:

\begin{definition}
\label{def:spoly}
Let $f,g \in \epr$. We assume w.l.o.g. that $\lc f \prec \lc g$.
Let $t = \lcm\left(\hm f, \hm g\right)$, $t_f = \frac{t}{\hm f}$, and.$t_g = \frac{t}{\hm
  g}$.
\begin{enumerate}
\item Let $a = \lcm\left(\lc f, \lc g\right)$, $a_f = \frac{a}{\lc g},$ and $a_g =
\frac{a}{\lc f}$. The \emph{\spt} of $f$ and $g$ is denoted 
\[\spoly f g = a_f t_f f - a_g t_g g.\]
\item Let $b = \gcd\left(\lc f, \lc g\right)$ . Choose $b_f, b_g \in \er$ such that $b = b_f \lc f + b_g \lc
g$.\footnote{Since $\er = \Z$ is an Euclidean ring the extended gcd always exists.}
The \emph{\gpt} of $f$ and $g$ is denoted 
\[\gpoly f g = b_f t_f f + b_g t_g g.\]
\end{enumerate}
\end{definition}

\begin{remark} \
\label{rem:gpairs}
\begin{enumerate}
\item In the field case we do not need to consider \gpts at all since we can
always normalize the polynomials, i.e. ensure that $\lc f = 1$.
\item Note that $\gpoly f g$ is not uniquely defined: Working over $\er = \Z$ we
know that we can write $\langle \lc f, \lc g \rangle$ as a principal ideal, say
$\langle c \rangle = \langle \lc f, \lc g \rangle$ for some $c\in\er$. Then
there exist $c_f \neq c_f'$, $c_g \neq c_g' \in \er$ such that
\[c_f \lc f + c_g \lc g = c = c_f' \lc f + c_g' \lc g.\]
Depending on the implementation of the $\gcd$ algorithm one specific choice is
made for each \gpt.
% \item Furthermore, if $\lc f \mid \lc g$ then $b_f
% = 1 $ and $b_g = 0$, thus $\gpoly f g$ is just a power product multiple of $f$.
% Thus, in a \stb computation it reduces to zero and is redundant.
\end{enumerate}
\end{remark}

From Example~\ref{ex:stronggb} it is clear that the usual Buchberger
algorithm as in the field case will not compute a strong \stb as we would only
consider $\spoly{2x}{3x} = 3 \cdot 2x - 2 \cdot 3x = 0$. Luckily, we
can fix this via taking care of the corresponding \gpt:
\[\gpoly{2x}{3x} = (-1) \cdot 2x - (-1) \cdot 3x = x.\]
It follows that given an ideal $I \subset \epr$ a strong \stb for $I$ can now be computed using
Buchberger's algorithm taking care not only of \spts but also of \gpts. We refer,
for example,  to~\cite{lichtblau2012} for more details.

In Algorithm~\ref{alg:bba} we give pseudo code for a generic Buchberger
algorithm over the integers. Here, no criterion for detecting useless elements
is applied. This is the topic of the next section. But what is necessary to
discuss beforehand is how to get strong standard representations of elements
handled by Algorithm~\ref{alg:bba}. This is the concept of a \emph{normal
form}:

\begin{definition}
Let $<$ be a monomial order on $\epr$.
Let $\mathcal G$ denote the set of all finite subsets $G \subset \epr$. We call
the map
\begin{center}
$
\begin{array}{rccc}
\text{\sffamily{NF}}: &\epr \times \mathcal G &\longrightarrow& \epr\\
                     & (f, G) &\longmapsto& \nf f G,
\end{array}
$
\end{center}
a \emph{weak normal form} w.r.t. $<$ if for all $f\in \epr$ and all $G \in
\mathcal G$ the following hold:
\begin{enumerate}
\item $\nf 0 G = 0$.
\item If $\nf f G \neq 0$ then $\hd{\nf f G} \notin L(G)$.
\item If $f \neq 0$ then there exists a unit $u\in \epr$ such that either
$uf = \nf f G$ or
$r = uf - \nf f G$ has a strong standard representation w.r.t. $G$.
\end{enumerate}
A weak normal form {\sffamily NF} is called a \emph{normal form} if we can always
choose $u=1$.
\end{definition}

Next we give algorithms that compute normal forms. For their correctness we
refer to Section~$1.6$ in~\cite{gpSingularBook2007}. Algorithm~\ref{alg:nfg}
presents a normal form algorithm for computation w.r.t. a global monomial order
$<$:

\begin{algorithm}
\caption{Normal form w.r.t. a global monomial order $<$
  (\nfn)} 
\label{alg:nfg}
\begin{algorithmic}[1]
\Require{Polynomial $f \in \epr$, finite subset $G\subset \epr$}
\Ensure{\nfn of $f$ w.r.t. $G$ and $<$}
\State{$h \gets f$}
\While{$\left(h \neq 0 \text{ and }G_h := \{g \in G \mid g \text{ top-reduces } h\}
  \neq\emptyset\right)$}
\State{Choose $g \in G_h$.}
\State{$h\gets $ Top-reduction of $h$ by $g$ (see
    Definition~\ref{def:reduction})}
\EndWhile
\State{$\text{\textbf{return }}h$}
\end{algorithmic}
\end{algorithm}

Note that Algorithm~\ref{alg:nfg} may enter an infinite \texttt{while} loop if applied
to local monomial orders. Let us illustrate this behaviour with the ``standard''
example. 

\begin{example}
Let $\epr = K[x]$ where $K$ is a field. We equip $\epr$ with a local monomial
order $<$, i.e. $x < 1$. We set $G = \{g\}$ where $g=x-x^2$ and we want to
compute $\nf f G$ where $f = x$. Using Algorithm~\ref{alg:nfg} we start setting
$h := f$ and find that $g$ top-reduces $h$:
\[h := x - \left(x-x^2\right) = x^2.\]
Now we can again top-reduce $h$ via subtracting $xg$:
\[h := x^2 - x\left(x-x^2\right) = x^3.\]
This process does not stop, but constructs a power series equation:
$x - \left(\sum_{i=0}^\infty x^i\right) \left(x-x^2\right) = 0.$
Since $x<1$ we know that $\sum_{i=0}^\infty x^i = \frac{1}{1-x}$. We see that
Algorithm~\ref{alg:nfg} computes correctly since $(1-x) x = x-x^2$,
still, it is not able to find the finite expression of the power series.
\end{example}

In~\cite{mora82} Mora gave the first attempt to achieve a terminating normal
form algorithm also for local monomial orders:

\begin{algorithm}[h]
\caption{Mora's normal form algorithm w.r.t. a local monomial order $<$
  (\nfn)} 
\label{alg:nfl}
\begin{algorithmic}[1]
\Require{Polynomial $f \in \epr$, finite subset $G\subset \epr$}
\Ensure{\nfn of $f$ w.r.t. $G$ and $<$}
\State{$h \gets f$}
\State{$T \gets G$}
\While{$\left(h \neq 0 \text{ and }T_h := \{g \in G \mid g \text{ top-reduces } h\}
  \neq\emptyset\right)$}
\State{Choose $g \in T_h$ with $\ecart g$ minimal.}
\If{$\left(\ecart g > \ecart h\right)$}
\State{$T \gets T \cup \{h\}$}
\EndIf
\State{$h\gets $ Top-reduction of $h$ by $g$ (see
    Definition~\ref{def:reduction})}
\EndWhile
\State{$\text{\textbf{return }}h$}
\end{algorithmic}
\end{algorithm}

\begin{example}
\label{ex:local-order}
Let $\epr = \Z[x,y]$. A strong \stb for the ideal $I=\left\langle 6+y+x^2,
4+x\right\rangle \subset \epr$ w.r.t. negative degree reverse
lexicographical order $<$ is given by
\[G = \left\{2-x+y+x^2, x-2y-x^2-xy-x^3\right\}.\]
Since $4+x \in I$ we assume that $\nf {4+x} G = 0$. In
Table~\ref{table:infinite-nf} we state Mora's normal form computation
with notation as in Algorithm~\ref{alg:nfl}, i.e. if we have a choice for the
reducer, we take the one of minimal possible ecart.
We start with $h=4+x$ and $T_h =
G$.
\begin{table}[h!]
	\centering
  \def\arraystretch{1.2}
  %\scalebox{0.75}{
  % \vspace*{-3mm}
    \begin{tabular}{c|c|c}
    \toprule
    \multicolumn{1}{c|}{$h$} &
    \multicolumn{1}{c|}{$g$} &
    \multicolumn{1}{c}{$h$ \emph{added to} $T_h$?}\\
    \midrule
    $4+x$ & $2-x+y+x^2$ & \checkmark\\
    $3x-2y-2x^2$ & $4+x$ & - \\
    $-x -2y - 3x^2$ & $x-2y-x^2-xy-x^3$ & \checkmark\\
    $-4y - 4x^2 - xy - x^3$ & $4+x$ & -\\
    $- 4x^2 - x^3$ & $4+x$ & -\\
    $0$ & - & -\\
    \bottomrule
    \end{tabular}
	\caption{Local normal form computation due to Mora}
	\label{table:infinite-nf}
\end{table}

Note the importance of $4+x$ being added to $T_h$. Without $4+x$ as reducer the
reduction process would not terminate.
\end{example}

\begin{remark}
Sometimes it can be more efficient to not only add the current status of $h$ to
$T_h$ as a new reducer, but also to generate \gpts with $h$. Doing so,
several lead coefficient reductions may be done in one step. See
Section~\ref{sec:nf} for more details.
\end{remark}
Now we are ready to state Buchberger's algorithm. For the theoretical background
of the algorithm (Buchberger's criterion) we refer to
Theorem~\ref{thm:buchberger-criterion} in Section~\ref{sec:pairs}.

\begin{algorithm}
\caption{Buchberger's algorithm for computing strong \stbs
  (\sbba)} 
\label{alg:bba}
\begin{algorithmic}[1]
\Require{Ideal $I=\langle f_1,\ldots,f_m\rangle \subset \epr$, monomial order
  $<$, normal form algorithm \nfn (depending on $<$)}
\Ensure{\gb $G$ for $I$ w.r.t. $<$}
\State{$G \gets \{f_1,\ldots,f_m\}$}
\State{$P \gets \left\{\spoly{f_i}{f_j}, \gpoly{f_i}{f_j} \mid 1 \leq i < j \leq
  m\right\}$}\label{alg:bba:update1}
\While{$\left(P \neq \emptyset\right)$}
\State{Choose $h \in P$, $P \gets P \setminus \{h\}$}\label{alg:bba:choose}
\State {$h \gets \nf h G$}
\If{$\left(h \neq 0\right)$}
\State{$P \gets \left\{\spoly{g}{h}, \gpoly{g}{h} \mid g \in
  G\right\}$}\label{alg:bba:update2}
\State{$G \gets G \cup \{h\}$}
\EndIf
\EndWhile
\State{$\text{\textbf{return }}G$}
\end{algorithmic}
\end{algorithm}

\section{How to choose Pairs}
\label{sec:pairs}
Having two classes of polynomials to handle, namely \spts and \gpts we also need
criteria for deciding when such a polynomial is useless in the sense of
predicting a zero reduction or having already a strong standard representation.
The criteria presented in the following are then applied to
Algorithm~\ref{alg:bba} in lines~\ref{alg:bba:update1} and~\ref{alg:bba:update2}
when new elements for the pair set are generated.

A first criterion takes care of useless \gpts:

\begin{lemma}
Let $f,g \in \epr$ such that $\lc f \mid \lc g$. Then $\gpoly f g$ reduces
to zero w.r.t. $\{f,g\}$.
\label{crit:gpoly-easy}
\end{lemma}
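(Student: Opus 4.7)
The plan is to exploit the freedom in the extended gcd noted in Remark~\ref{rem:gpairs}(2), choose the Bézout decomposition that collapses $\gpoly f g$ to a monomial multiple of $f$, and then finish with a single top-reduction.

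First I would unpack the hypothesis. Since $\lc f \mid \lc g$, we have $\gcd\left(\lc f, \lc g\right) = \lc f$ up to a unit, so in Definition~\ref{def:spoly}(2) we may set $b = \lc f$. The trivial Bézout identity $\lc f = 1 \cdot \lc f + 0 \cdot \lc g$ is valid, so $b_f = 1$, $b_g = 0$ is a legitimate choice; this is also the natural output of any reasonable extended gcd routine when one argument divides the other.

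With these coefficients, $\gpoly f g = b_f t_f f + b_g t_g g = t_f f$, a monomial multiple of $f$. Its lead monomial is $t_f \hm f = \lcm\left(\hm f, \hm g\right)$ and its lead coefficient is $\lc f$, so $f$ top-reduces it via the trivial division $\lc f = 1 \cdot \lc f + 0$ permitted by Definition~\ref{def:reduction}(1). A single such step yields
\[
t_f f - 1 \cdot \frac{\hm{t_f f}}{\hm f} \cdot f \;=\; t_f f - t_f f \;=\; 0,
\]
as required.

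I do not anticipate any substantive obstacle; the argument is essentially a bookkeeping observation. The only delicate point is the non-uniqueness of $\gpoly f g$: a different Bézout decomposition would yield a $\gpt$ that differs from $t_f f$ by a (nonzero) multiple of $\spoly f g$, which in general need not reduce to zero on its own. The lemma should therefore be read as asserting that \emph{there exists} a representative $\gpoly f g$ that reduces to zero, which is exactly what is needed to discard the pair when the pair set is updated in lines~\ref{alg:bba:update1} and~\ref{alg:bba:update2} of Algorithm~\ref{alg:bba}.
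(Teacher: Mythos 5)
Your argument is essentially the paper's own proof: both select the Bézout coefficients $b_f = 1$, $b_g = 0$ (legitimate since $\gcd(\lc f,\lc g)=\lc f$), so that $\gpoly fg = t_f f$ is a monomial multiple of $f$ and hence reduces to zero. Your closing remark on the non-uniqueness of the \gpt and the existential reading of the lemma is a correct and worthwhile clarification that the paper leaves implicit, but it does not change the substance of the argument.
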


\begin{proof}
Since $\gcd\left(\lc f, \lc g\right) = \lc f$ we can choose $b_f = 1$ and $b_g
=0$. Thus $\gpoly f g = 1 \cdot t_f \cdot f - 0 \cdot t_g \cdot g = t_f f$ for monomial
multiples $t_f, t_g$ such that $t_f \hm f = t_g \hm g = \lcm\left(\hm f, \hm
    g\right)$. It follows that $\gpoly f g$ just a multiple of $f$ and
thus reduces to zero w.r.t. $\{f,g\}$.
\end{proof}

As a next step we state well-known criteria by Buchberger, the Product and the
Chain criterion:

\begin{lemma}[Buchberger's Product Criterion]
Let\\
$f,g \in \epr$ be polynomials such that $\hm f$ and $\hm g$ are coprime and $\lc
f$ and $\lc g$ are coprime.
Then $\spoly f g$ reduces to zero w.r.t. $\{f,g\}$
\label{thm:prod-crit}
\end{lemma}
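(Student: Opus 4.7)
The plan is to exploit both coprimality hypotheses to rewrite $\spoly{f}{g}$ as an explicit $\epr$-linear combination of $f$ and $g$, and then to eliminate that combination by successive top-reductions until zero.

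First, I would unfold Definition~\ref{def:spoly} under the two coprimality assumptions. Coprimality of $\hm{f}$ and $\hm{g}$ gives $\lcm(\hm{f},\hm{g}) = \hm{f}\hm{g}$, so $t_f = \hm{g}$ and $t_g = \hm{f}$; coprimality of $\lc{f}$ and $\lc{g}$ gives $\lcm(\lc{f},\lc{g}) = \lc{f}\lc{g}$, so $a_f = \lc{f}$ and $a_g = \lc{g}$. Substituting yields $\spoly{f}{g} = \hd{g}\,f - \hd{f}\,g$. Writing $f = \hd{f} + f'$ and $g = \hd{g} + g'$ the $\hd{f}\hd{g}$ cross terms cancel, producing the key identity
$$\spoly{f}{g} = f'\,g - g'\,f,$$
which is the algebraic heart of the argument.

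Second, I would verify that there is no cancellation of lead terms on the right-hand side. If $f'=0$ or $g'=0$ the statement is immediate, as $\spoly{f}{g}$ is then a polynomial multiple of $f$ or $g$. Otherwise, $\hm{f'g} = \hm{f'}\hm{g}$ and $\hm{g'f} = \hm{g'}\hm{f}$ must be distinct: equality would force $\hm{g}\mid \hm{g'}\hm{f}$, hence $\hm{g}\mid \hm{g'}$ by $\gcd(\hm{f},\hm{g})=1$, contradicting $\hm{g'} < \hm{g}$ (since $g'$ is the tail of $g$). Thus the lead term of $\spoly{f}{g}$ coincides with that of the dominant summand, and the right-hand side is a strong standard representation with respect to $\{f,g\}$.

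Third, I would translate this representation into an explicit reduction path. Assume WLOG $\hm{g'f} > \hm{f'g}$. Then $\hm{\spoly{f}{g}} = \hm{g'}\hm{f}$ is divisible by $\hm{f}$ and $\lc{\spoly{f}{g}} = -\lc{g'}\lc{f}$ is an $\er$-multiple of $\lc{f}$, so $f$ top-reduces $\spoly{f}{g}$ via $a = -\lc{g'}$ and $b = 0$. The remainder is again of the shape $f'g - \widetilde{g'}\,f$, with $\widetilde{g'} = g' - \hd{g'}$ having one fewer term. Iterating this strips $g'$ entirely, after which the residual $f'g$ is cleared by the symmetric top-reductions with $g$. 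Each step strictly decreases the leading monomial within the finite monomial support of the starting expression, so the process terminates at $0$.

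The chief obstacle is in the last step: one must check that the shape $\tilde f'\,g - \tilde g'\,f$ with $\hm{\tilde f'} < \hm{f}$ and $\hm{\tilde g'} < \hm{g}$ is preserved under every top-reduction, which is exactly the invariant that makes each successive top-reduction legitimate over $\er$ (so that $b = 0$ is always a valid choice in Definition~\ref{def:reduction}). Both coprimality hypotheses are used jointly here — the coprimality of lead monomials to rule out spurious cancellations of dominant terms along the way, and the coprimality of lead coefficients to ensure that the lead coefficient encountered at each stage is always an honest $\er$-multiple of $\lc{f}$ or $\lc{g}$.
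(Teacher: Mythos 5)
Your proof is correct and is exactly the field-case argument the paper defers to (Theorem~3 of Lichtblau): the identity $\spoly{f}{g}=f'g-g'f$ (here coprimality of the lead coefficients forces $\lcm\left(\lc f,\lc g\right)=\lc f\lc g$, so no division is needed to form the \spt) together with $\hm{f'g}\neq\hm{g'f}$, which follows from coprimality of the lead monomials, yields a strong standard representation w.r.t.\ $\{f,g\}$, and your iterated top-reductions simply trace this representation out with $b=0$ at every step. One caveat: the inference ``$\hm g\mid\hm{g'}$ contradicts $\hm{g'}<\hm g$'' is valid only for global orders (over a local order a proper monomial multiple is strictly smaller in the order), so your argument covers the Gr\"obner-basis setting, which is the same generality as the paper's cited reference.
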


\begin{proof}
The proof is the same as in the field case.
See Theorem~3 in~\cite{lichtblau2012} for more details.
\end{proof}

Note that Lemma~\ref{thm:prod-crit} does \emph{not} apply to \gpts. Take, for
example, $G=\{f,g\}$ with $f=3x$ and $g=2y$. Then $\hm f = x$ and $\hm g = y$
are coprime and $\spoly f g = g f - fg = 0$, but $\gpoly f g = y \cdot 3x - x
\cdot 2y = xy$. Clearly, we cannot reduce $xy$ any further w.r.t. $G$.

\begin{lemma}[Buchberger's Chain Criterion]
Let $G \subset \epr$ finite and let $f,g,h \in G$ be polynomials such that
\begin{enumerate}
\item $\hm f \mid \lcm\left(\hm g, \hm h\right)$ and
\item $\lc f \mid \lcm\left(\lc g, \lc h\right)$.
\end{enumerate}
If $\spoly f g$ and $\spoly f h$ have a strong standard representation w.r.t.
$G$ then also $\spoly g h$ has a strong standard representation w.r.t. $G$.
\label{thm:chain-crit}
\end{lemma}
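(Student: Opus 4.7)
The strategy is to mimic the classical field-case argument by expressing $\spoly g h$ as an explicit $\epr$-linear combination of $\spoly f g$ and $\spoly f h$, with multipliers whose monomial parts are strictly smaller than $t_{gh} \defeq \lcm(\hm g, \hm h)$. Substituting the hypothesised strong standard representations of $\spoly f g$ and $\spoly f h$ will then give a representation of $\spoly g h$ in which every summand has lead monomial strictly less than $t_{gh}$, from which the desired strong standard representation will be extracted.

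Concretely, set $t_{fg}=\lcm(\hm f,\hm g)$, $t_{fh}=\lcm(\hm f,\hm h)$, and analogously $a_{fg}=\lcm(\lc f,\lc g)$, $a_{fh}=\lcm(\lc f,\lc h)$, $a_{gh}=\lcm(\lc g,\lc h)$. The first hypothesis gives $t_{fg}\mid t_{gh}$ and $t_{fh}\mid t_{gh}$, while the second, combined with $\lc g,\lc h\mid a_{gh}$, yields $a_{fg}\mid a_{gh}$ and $a_{fh}\mid a_{gh}$. Hence the scalars $u_1\defeq \frac{t_{gh}}{t_{fg}}\cdot\frac{a_{gh}}{a_{fg}}$ and $u_2\defeq \frac{t_{gh}}{t_{fh}}\cdot\frac{a_{gh}}{a_{fh}}$ lie in $\epr$. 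A direct computation on the definition of $\spoly{\cdot}{\cdot}$ shows that the $f$-contributions in $u_1\spoly f g$ and $u_2\spoly f h$ cancel (both reduce to $\frac{t_{gh}a_{gh}}{\hm f\,\lc f}$), and the remaining $g$- and $h$-contributions reassemble exactly into $\pm\spoly g h$, giving the identity $\spoly g h = u_2\spoly f h - u_1\spoly f g$. Both divisibility hypotheses are used precisely here: without them $u_1,u_2$ would not be elements of $\epr$.

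Now substitute the assumed strong standard representations $\spoly f g=\sum_i h^{(fg)}_i g_i$ and $\spoly f h=\sum_i h^{(fh)}_i g_i$. Because $\spoly f g$ and $\spoly f h$ are S-polynomials, their lead monomials are \emph{strictly} below $t_{fg}$ and $t_{fh}$ respectively, and a strong representation in particular satisfies $\hm{h^{(fg)}_i g_i}\leq \hm{\spoly f g}<t_{fg}$, and similarly for $h^{(fh)}_i$. Multiplication by $u_1$ (which has monomial part $t_{gh}/t_{fg}$) and $u_2$ (with monomial part $t_{gh}/t_{fh}$) therefore yields a representation $\spoly g h=\sum_i H_i g_i$ with $\hm{H_i g_i}<t_{gh}$ for every $i$. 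This is a $t_{gh}$-representation in the sense of Becker--Weispfenning.

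The main obstacle is the final upgrade: a $t_{gh}$-representation need not be a strong standard representation, since cancellation among the top summands can hide $\hm{\spoly g h}$ below the maximum $\hm{H_i g_i}$. The plan is to remove such cancellation by an induction on the maximum $M=\max_i \hm{H_i g_i}$ with respect to the monomial order. If $M=\hm{\spoly g h}$ and only one index attains it, we are done. Otherwise, the summands $H_i g_i$ whose lead monomial equals $M$ have coefficients in $\er$ summing to a value whose coefficient at $M$ cancels (if $M>\hm{\spoly g h}$) or contributes a single controlled term (if $M=\hm{\spoly g h}$); in either case, applying Definition~\ref{def:reduction} and the fact that all relevant $\hd{g_i}$ divide $\lc{H_i}\,M$, we can rewrite the top contributions using \spts and \gpts between the competing $g_i$'s. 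By hypothesis these auxiliary polynomials already have strong standard representations (inductively, they correspond to earlier pairs), which strictly reduces $M$ and allows the induction to close, producing the required strong standard representation of $\spoly g h$.
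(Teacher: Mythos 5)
Your decomposition is exactly the paper's: the paper writes $\spoly g h = c_{f,g}\, m_{f,g}\, \spoly f g + c_{f,h}\, m_{f,h}\, \spoly f h$ and argues that hypothesis $(1)$ makes the monomial multiples $m_{f,g}, m_{f,h}$ polynomial and hypothesis $(2)$ makes the scalar multiples $c_{f,g}, c_{f,h}$ lie in $\er$; these are precisely your $u_1 = \tfrac{t_{gh}}{t_{fg}}\cdot\tfrac{a_{gh}}{a_{fg}}$ and $u_2 = \tfrac{t_{gh}}{t_{fh}}\cdot\tfrac{a_{gh}}{a_{fh}}$ split into their monomial and coefficient parts. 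Your verification of the identity and of the degree bounds $\hm{H_i g_i} < t_{gh}$ is correct and matches the paper's sketch, which then defers the remaining details to Lichtblau's Theorem 4.

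The problem is your last paragraph. You correctly flag that the substitution step only yields a $t_{gh}$-representation of $\spoly g h$ (all summands strictly below $t_{gh}$), not automatically a \emph{strong standard representation} (there may be top cancellation among several summands attaining the maximal $\hm{H_i g_i}$). But the inductive repair you propose does not close: you appeal to ``by hypothesis these auxiliary polynomials already have strong standard representations (inductively, they correspond to earlier pairs),'' yet the lemma's hypotheses concern \emph{only} $\spoly f g$ and $\spoly f h$. There is no ordering of pairs supplied, and no assumption about S- or GCD-pairs between the ``competing $g_i$'s'' that would let you rewrite the cancelling top tier. Those auxiliary assumptions live in the proof of the full Buchberger criterion (Theorem~\ref{thm:buchberger-criterion}), where one works with the global hypothesis that \emph{all} required pairs have been processed; imported into this free-standing lemma they are simply absent, so your induction has nothing to stand on. The paper avoids this by not attempting a self-contained upgrade at all and instead citing the field-case argument and Lichtblau (who proves the statement under the stronger hypothesis $\lc f \mid \lc g \mid \lc h$ resp.\ $\lc g \mid \lc f \mid \lc h$); if you want a complete proof of the statement as written you would need either to reformulate the conclusion in terms of $t_{gh}$-representations, or to make the dependence on the surrounding Buchberger-criterion machinery explicit.
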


\begin{proof}
The lemma is proven as in the field case. We want to have an \spt chain
\[\spoly g h = c_{f,g} m_{f,g} \spoly f g + c_{f,h} m_{f,h} \spoly f h\]
for some coefficients $c_{f,g}, c_{f,h}$ and  some monomials $m_{f,g}, m_{f,h}$.
Property $(1)$ ensures proper monomial multiples $m_{f,g}$ and $m_{f,h}$.
Working over the integers we have to take care of the coefficients, too. Thus
property $(2)$ is needed to ensure the existence of proper multiples $c_{f,g}$
and $c_{f,h}$. For more details see, for example, the proof of Theorem~4
in~\cite{lichtblau2012}: There the statement is proven for a strengthened
version of property $(2)$, namely $\lc f \mid \lc g \mid \lc h$ resp. $\lc g
\mid \lc f \mid \lc h$.
\end{proof}

Moreover, we can state a similar criterion for \gpts:

\begin{lemma}
Let $G \subset \epr$ finite and let $f,g,h \in G$ be polynomials such that
\begin{enumerate}
\item $\hm f \mid \lcm\left(\hm g, \hm h\right)$ and
\item $\lc f \mid \gcd\left(\lc g, \lc h\right)$.
\end{enumerate}
Then $\gpoly g h$ has a strong standard representation w.r.t. $G$.
\label{thm:chain-crit-gcd}
\end{lemma}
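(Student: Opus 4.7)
Set $M := \lcm\left(\hm g, \hm h\right)$ and $b := \gcd\left(\lc g, \lc h\right)$, and choose $b_g, b_h \in \er$ with $b_g \lc g + b_h \lc h = b$, so that $\gpoly g h = b_g t_g g + b_h t_h h$ (with $t_g := M/\hm g$ and $t_h := M/\hm h$) has leading term $b \cdot M$. By hypothesis~(1), $t_f := M/\hm f \in \mon$; by hypothesis~(2), $c := b/\lc f \in \er$. Then $c t_f f$ has leading term $c \lc f \cdot t_f \hm f = b \cdot M$, matching that of $\gpoly g h$, so it is the natural candidate for the single summand of lead monomial $M$ in the sought representation.

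The plan is then to exhibit $\gpoly g h$ as $c t_f \cdot f$ plus a polynomial of lead monomial strictly less than $M$ that itself admits a standard decomposition below $M$. Writing $\lc g = k_g \lc f$ and $\lc h = k_h \lc f$, one checks $b_g k_g + b_h k_h = c$, and substitution yields
\[\gpoly g h \;=\; c t_f f + b_g\bigl(t_g g - k_g t_f f\bigr) + b_h\bigl(t_h h - k_h t_f f\bigr).\]
Each bracket is a polynomial of lead monomial strictly less than $M$, since its two summands have identical leading terms $\lc g \cdot M$ (respectively $\lc h \cdot M$) that cancel. Moreover, a short computation identifies each bracket, up to sign, with a monomial multiple of an S-polynomial: $t_g g - k_g t_f f = -\bigl(M/\lcm(\hm f, \hm g)\bigr)\cdot\spoly f g$, and similarly for the $h$-bracket. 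Hence $\gpoly g h - c t_f f$ is an $\er$-linear combination of monomial-scaled S-polynomials $\spoly f g$ and $\spoly f h$, each of lead monomial strictly below $M$.

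The final step is to upgrade this identity to a genuine strong standard representation $h_f f + h_g g + h_h h$ with $\hm{h_f f} = M$ and all other $\hm{h_i g_i} < M$. Mimicking the proof of the chain criterion for S-polynomials (Lemma~\ref{thm:chain-crit}), one substitutes strong standard representations of $\spoly f g$ and $\spoly f h$ w.r.t.\ $G$ --- available in the algorithmic setting of Algorithm~\ref{alg:bba}, where the corresponding S-pairs of lower lead monomial have already been processed --- into the identity above; after the substitution, the contribution from $c t_f \cdot f$ supplies the unique summand of lead monomial $M$, while every other product inherits a lead monomial strictly below $M$ from the scaled representations of $\spoly f g$ and $\spoly f h$. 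The hard part is the careful bookkeeping of lead monomials after multiplying those strong representations by the monomial scalars $M/\lcm(\hm f, \hm g)$ and $M/\lcm(\hm f, \hm h)$, together with checking the integrality of the coefficient factors $b_g k_g$ and $b_h k_h$ over $\er$; both are precisely what hypotheses~(1) and~(2) ensure.
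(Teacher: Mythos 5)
The paper's ``proof'' is a one-line citation, so what you give is a genuine worked-out argument, and its core is the standard chain-criterion route and is correct as algebra. The identity $\gpoly g h = c\,t_f f + b_g\bigl(t_g g - k_g t_f f\bigr) + b_h\bigl(t_h h - k_h t_f f\bigr)$ holds since $b_g k_g + b_h k_h = c$; each bracket is (up to a unit coming from the ambiguity in $\lcm$) the monomial multiple $\bigl(M/\lcm(\hm f,\hm g)\bigr)\spoly f g$ resp.\ $\bigl(M/\lcm(\hm f,\hm h)\bigr)\spoly f h$; hypothesis (1) makes $t_f$ and both scaling monomials actual monomials, hypothesis (2) makes $c,k_g,k_h\in\er$; and after substituting strong standard representations $\spoly f g = \sum q_i g_i$, $\spoly f h=\sum q_i' g_i$, every contributed summand has lead monomial $<M$ because $\hm{\spoly fg}<\lcm(\hm f,\hm g)$ forces $\bigl(M/\lcm(\hm f,\hm g)\bigr)\hm{q_i g_i}<M$, and in particular the extra $f$-coefficient picked up has lead monomial $<t_f$, so it cannot cancel $c\,t_f$; finally $c\neq 0$ since $\gcd(\lc g,\lc h)\neq 0$. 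You only sketch this last bookkeeping, but it does go through.

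The genuine gap is the hypothesis you quietly import: you need strong standard representations of $\spoly f g$ and $\spoly f h$ w.r.t.\ $G$, and you justify that by appealing to the ``algorithmic setting.'' But the lemma as printed carries no such condition, unlike Lemma~\ref{thm:chain-crit}, which is explicitly conditional. And the unconditional statement is in fact false: in $\Z[x]$ with a global order take $f=2x+1$, $g=4x$, $h=6x$. Then $\hm f=\hm g=\hm h=x$, so (1) holds, and $\lc f=2=\gcd(4,6)$, so (2) holds; $\gpoly g h=2x$, yet every nonzero polynomial multiple of $f$, $g$ or $h$ has lead monomial $\geq x$, so $2x$ has no representation with a unique summand of lead monomial $x$ and all others strictly below. (Indeed in this example $\spoly f g$ and $\spoly f h$ themselves lack strong standard representations w.r.t.\ $G$, which is exactly what your argument needs.) So the conditional you invoke is not a convenience but a necessity --- your proof is sound once the lemma is read with the same ``if $\spoly f g$ and $\spoly f h$ have strong standard representations w.r.t.\ $G$'' hypothesis as its S-polynomial counterpart, and it would be worth saying so explicitly rather than deferring to the algorithmic context.
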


\begin{proof}
See Theorem~10.11 in~\cite{bwkGroebnerBases1993} and Theorem~5
in~\cite{lichtblau2012}.
\end{proof}

Besides these statements one can ``decide'' for two polynomials $f$ and
$g$ if we need to compute the corresponding \gpt or the corresponding \spt.
This goes back to~\cite{kapur1988} and can be also found as a variant of
Buchberger's criterion as Theorem~2 in~\cite{lichtblau2012}.

\begin{theorem}[Variant of Buchberger's Criterion, Theorem~2
in~\cite{lichtblau2012}]
Let $G \subset \epr$ be a finite set of polynomials, let $<$ be a monomial order
on $\epr$. The following are equivalent:
\begin{enumerate}
\item $G$ is a strong standard basis w.r.t. $<$.
\item\label{cond:all} For all $f,g \in G$ $\spoly f g$ and $\gpoly f g$ reduce to zero.
\item\label{cond:only} For all $f,g \in G$ the following hold:
\begin{enumerate}
\item If $\lc f \mid \lc g$ or $\lc g \mid \lc f$ then $\spoly f g$ reduces to zero.
\item If $\lc f \nmid \lc g$ and $\lc g \nmid \lc f$ then $\gpoly f g$ reduces to zero.
\end{enumerate}
\end{enumerate}
\label{thm:buchberger-criterion}
\end{theorem}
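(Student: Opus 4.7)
The plan is to establish the cycle $(1) \Rightarrow (2) \Rightarrow (3) \Rightarrow (1)$. The implication $(1) \Rightarrow (2)$ is immediate from the remark after Definition~\ref{def:strong-sb}: both $\spoly f g$ and $\gpoly f g$ lie in $\langle G\rangle$, so under hypothesis $(1)$ they admit strong standard representations, which is equivalent to reducing to zero under $\nfn$. The implication $(2) \Rightarrow (3)$ is trivial, as $(3)$ only demands a subset of the reductions in $(2)$.

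For $(3) \Rightarrow (1)$ I would argue directly in the spirit of Buchberger's classical proof. Pick any nonzero $h \in \langle G\rangle$ and fix a representation $h = \sum_i h_i g_i$ with $g_i \in G$ that minimizes $t := \max_i \hm{h_i g_i}$ and, subsequently, the cardinality of $S := \{i : \hm{h_i g_i} = t\}$. If $|S| = 1$, the $t$-coefficient of $h$ equals $\lc{h_j}\lc{g_j}$ for the unique $j \in S$, so $\hd{g_j} \mid \hd h$ and $G$ is a strong standard basis.

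Otherwise, pick indices $i \ne j$ in $S$ and split into the two coefficient-divisibility cases. If $\lc{g_i} \mid \lc{g_j}$ or vice versa, hypothesis $(3)(a)$ supplies a strong standard representation of $\spoly{g_i}{g_j}$; multiplying by $t/\lcm(\hm{g_i}, \hm{g_j})$ and substituting into $h_i g_i + h_j g_j$ in the classical manner drops either $t$ or $|S|$. If neither lead coefficient divides the other, hypothesis $(3)(b)$ gives a strong standard representation of $\gpoly{g_i}{g_j}$, whose lead coefficient is $b = \gcd(\lc{g_i}, \lc{g_j})$; since the $t$-coefficient of $h_i g_i + h_j g_j$ is an $\er$-linear combination of $\lc{g_i}$ and $\lc{g_j}$ and therefore an $\er$-multiple of $b$, it can be absorbed into a scalar multiple of $\gpoly{g_i}{g_j}$, and substituting its strong standard representation again lowers $t$ or $|S|$, contradicting minimality.

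The main obstacle is case $(3)(b)$: since we do not assume $\spoly{g_i}{g_j}$ reduces to zero, the usual field-case cancellation argument has to be routed through $\gpoly{g_i}{g_j}$ instead. The necessary bookkeeping — verifying that all auxiliary terms introduced via the strong standard representation of $\gpoly{g_i}{g_j}$ have lead monomial strictly below $t$, and that the running sum still equals $h$ — is where the Euclidean setting adds real friction compared with the field case; detailed versions of closely related arguments appear as Theorem~2 in~\cite{lichtblau2012} and Theorem~10.11 in~\cite{bwkGroebnerBases1993}, and I would appeal to those for the computational details.
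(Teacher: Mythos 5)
The paper never proves this theorem; it is stated with a pointer to Theorem~2 of \cite{lichtblau2012} and no \texttt{proof} environment follows, so there is no ``paper's own proof'' to match against. Your $(1)\Rightarrow(2)\Rightarrow(3)$ is fine, and the skeleton of $(3)\Rightarrow(1)$ --- minimize $(t,\card{S})$ and split the pair $\{i,j\}\subseteq S$ by coefficient divisibility --- is the right shape; case (3)(a) works exactly as you describe.

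Case (3)(b), however, contains a genuine gap that is more than ``bookkeeping.'' Write $u=\lcm\pare{\hm{g_i},\hm{g_j}}$. After you absorb $\lambda\,\tfrac{t}{u}\,\gpoly{g_i}{g_j}$ and substitute its strong standard representation, the leftover piece of $h_ig_i+h_jg_j$ is a multiple of $\spoly{g_i}{g_j}$; expanded as a combination of $g_i$ and $g_j$, that leftover still carries a term with lead monomial $t$ from \emph{each} of $g_i$ and $g_j$ (they cancel only in the sum), while the substituted representation introduces a \emph{new} contributor at level $t$, namely the top reducer $g_l$ of $\gpoly{g_i}{g_j}$. Hence $\card{S}$ does not drop --- it typically increases --- and hypothesis (3)(b) is silent about $\spoly{g_i}{g_j}$, so you cannot yet invoke minimality. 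The missing step is structural: the unique maximal summand in the strong standard representation of $\gpoly{g_i}{g_j}$ comes from some $g_l\in G$ with $\hm{g_l}\mid u$ and $\lc{g_l}\mid\gcd\pare{\lc{g_i},\lc{g_j}}$. In particular $\lc{g_l}$ divides both $\lc{g_i}$ and $\lc{g_j}$, so the pairs $(g_l,g_i)$ and $(g_l,g_j)$ fall under (3)(a), and Lemma~\ref{thm:chain-crit} (with $f=g_l$, $g=g_i$, $h=g_j$) then yields a strong standard representation for $\spoly{g_i}{g_j}$ as well. Once you also have that, the residual $\spoly$-piece can be rewritten below level $t$ exactly as in case (a) and the contradiction to minimality goes through. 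This deduction is where the Euclidean structure actually enters; it should be made explicit rather than folded into a citation to \cite{lichtblau2012, bwkGroebnerBases1993}.
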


Clearly, when implementing the above criteria especially the choice between
Condition~\ref{cond:all} and Condition~\ref{cond:only} in
Theorem~\ref{thm:buchberger-criterion} has a huge influence on the computation:
\begin{enumerate}
\item Depending the choice of the next element in the pair set in
Algorithm~\ref{alg:bba} it is obvious that Condition~\ref{cond:only} lies an
emphasis on \gpts. For a pair of polynomials $f,g\in G$ the algorithm tries to
keep of the lead coefficient of the genrated polynomial as small as possible.
This process goes on until at some point eventually this smaller lead
coefficient divides $\lc f$ $\lc g$. Then the corresponding \spt is generated
which then removes the whole lead term.
\item If we use Condition~\ref{cond:all} then there might be a lead term
cancellation, i.e. \spt, being handled before the complete reduction process of
the lead coefficient, i.e. handling of \gpts, is finished.
\end{enumerate}

Of course, one can have an influence on the above situation depending on the
choice of the next element from the pair set $P$ in Line~\ref{alg:bba:choose} of
Algorithm~\ref{alg:bba}. Lichtblau notes in~\cite{lichtblau2012} that, until
now,
there is no real comparison between the two attempts due to missing
implementations.

One statement we can make is the following:

\begin{proposition}
Theorem~\ref{thm:buchberger-criterion} already includes
Lemma~\ref{thm:prod-crit}.
\label{prop:prod-crit-useless}
\end{proposition}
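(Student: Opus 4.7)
The plan is to argue that whenever Lemma~\ref{thm:prod-crit} applies, the variant of Buchberger's criterion (Condition~\ref{cond:only} of Theorem~\ref{thm:buchberger-criterion}) either automatically avoids the \spt in question or reduces it to the classical field-case situation, so no extra criterion beyond Theorem~\ref{thm:buchberger-criterion} is needed. So I would fix two polynomials $f, g \in \epr$ satisfying the hypotheses of Lemma~\ref{thm:prod-crit}, i.e. $\hm f, \hm g$ coprime and $\lc f, \lc g$ coprime, and split into two cases according to whether the coprime lead coefficients contain a unit.

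In the generic case where neither $\lc f$ nor $\lc g$ is a unit, coprimality forces $\lc f \nmid \lc g$ and $\lc g \nmid \lc f$. Thus the pair $(f,g)$ falls into branch (b) of Condition~\ref{cond:only}, and Theorem~\ref{thm:buchberger-criterion} does not require $\spoly f g$ to reduce to zero at all; only $\gpoly f g$ must. Hence the statement of Lemma~\ref{thm:prod-crit}, which \emph{removes} $\spoly f g$ from consideration, is vacuous under this condition: the pair was never on the list of things to check in the first place.

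In the remaining case one of the lead coefficients, say $\lc f$, is a unit. Then $\lc f \mid \lc g$, so branch (a) of Condition~\ref{cond:only} indeed does ask for $\spoly f g$ to reduce to zero. Here I would reduce to the field-case Product Criterion: since $\lc f$ is invertible in \er we may multiply $f$ by $\lc f^{-1}$ without changing its lead monomial, and $\spoly f g$ becomes a field-style S-polynomial between two polynomials whose lead monomials are coprime. The classical argument (which is exactly what the proof of Lemma~\ref{thm:prod-crit} invokes via Theorem~3 of~\cite{lichtblau2012}) then shows $\spoly f g$ reduces to zero with respect to $\{f,g\}$, so branch (a) is satisfied automatically.

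Combining both cases, the presence or absence of the Product Criterion has no effect on whether Condition~\ref{cond:only} holds for the pair $(f,g)$, which is precisely the content of the proposition. The only mildly delicate step, and the one I would be most careful about, is the unit sub-case, because one must make sure that normalizing by $\lc f^{-1}$ does not affect the \gpt obligations for other pairs and that the reduction produced is genuinely a reduction in the sense of Definition~\ref{def:reduction}; all of that is routine once the normalization is made explicit.
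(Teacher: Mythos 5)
Your proof is correct and actually more careful than the paper's own one-line argument. The paper's proof is essentially your generic case stated without qualification: it asserts that ``Lemma~\ref{thm:prod-crit} applies only in the other situations'' (i.e.\ when neither lead coefficient divides the other) and hence that no \spt is ever generated for a pair to which the Product Criterion applies. That assertion is not quite right: coprimality of $\lc f$ and $\lc g$ does not exclude divisibility when one of them is a unit, e.g.\ $\lc f = \pm 1$ over $\Z$, in which case $\lc f \mid \lc g$ and $\gcd\left(\lc f,\lc g\right)$ is a unit simultaneously, so the hypotheses of Lemma~\ref{thm:prod-crit} and of branch~(a) of Condition~\ref{cond:only} overlap. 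You spotted this and split into the two cases accordingly, falling back on the field-case Product Criterion once the unit lead coefficient is normalized away. The paper's proof silently identifies ``coprime'' with ``incomparable under divisibility'' and so omits this sub-case; your treatment is the complete version of the intended argument.

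One small caveat on the framing rather than the mathematics: in the unit sub-case the Product Criterion is not literally without effect, since branch~(a) does generate $\spoly f g$ and the Product Criterion then lets the algorithm discard it without performing the reduction. So in that corner of the hypothesis space the Lemma still buys a (minor) computational saving, and the cleanest statement of what your case analysis actually establishes is: whenever Lemma~\ref{thm:prod-crit} applies, either no \spt is generated by Condition~\ref{cond:only}, or the \spt that is generated provably reduces to zero by the field-case argument. That is a faithful reading of ``includes,'' and your proof delivers it; just be aware that the last sentence of your write-up (``has no effect on whether Condition~\ref{cond:only} holds'') is a weaker claim than redundancy in the algorithm, since the truth of Condition~\ref{cond:only} is never in question -- what is at stake is which reductions the algorithm must actually perform.
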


\begin{proof}
By Theorem~\ref{thm:buchberger-criterion} we do only consider $\spoly f g$ if either
$\lc f \mid \lc g$ or $\lc g \mid \lc f$. Lemma~\ref{thm:prod-crit} applies only
in the other situations, but there no \spt is generated at all.
\end{proof}

We have implemented Buchberger's algorithm with both variants of Buchberger's
criterion in \singular. In Section~\ref{sec:results} we present more detailed
results.

\section{Avoiding Coefficient Swell}
\label{sec:coefficients}
One main problem when computing over the integers is coefficient growth. We
cannot normalize polynomials as usually done over fields. The only method to
keep at least lead coefficients as small as possible from inside the algorithm
is to efficiently compute \gpts as discussed in Section~\ref{sec:pairs}.

The first idea to handle coefficient swell might be to use modular methods as
done over fiels, see, for example,~\cite{arnoldModular2003}. Sadly this concept
is not working over the integers:

\begin{example}
Let $I = \langle 6x, 8x \rangle \subset \Z[x]$. A strong \stb w.r.t. a global
monomial order $<$ is $G = \{2x,6x,8x\}$ where $2x = \gpoly{8x}{6x}$. Next we try to
compute modular standard bases for $\langle 6x, 8x \rangle \subset \F_p[x]$
for some prime number $p > 3$.\footnote{We choose $p>3$ since it should at least
not divide the lead coefficients of the input polynomials.} The corresponding
\stb is $G_p = \{6x,8x\}$: Buchberger's algorithm over $\F_p$ only considers
$\spoly{8x}{6x} = 0$ and terminates afterwards, thus only the initial generators
are added to $G_p$. If
we ensure that Buchberger's algorithm computes a reduced\footnote{A
reduced \stb over a field w.r.t. a global monomial order means that the lead
coefficients of all elements in the basis are $1$ and no lead term divides
any term of any other polynomial in the basis.} \stb we then get $G_p =
\{x\}$. The problem is that in no case we would get $2x \in G_p$ which is
the important element for the strongness of the \stb for $I$ over $\Z$. Even
before applying Hensel lifting or the Chinese remainder theorem the information
needed is lost.

Thus there is no way to compute a strong \stb over $\Z$ via modular computations
over $\F_p$ and lifting techniques in general.
\end{example}

One trick we can do is trying to find monomials or constants in the ideal we
want to compute a strong \stb for. If we can add such elements to the list of
input polynomials of Algorithm~\ref{alg:bba} this can give a huge speed up to
the overall computation. The following lemma gives us a hint on how to do so.

\begin{lemma}
Let $<$ be a monomial order on $\Z[x]$ and let
$I=\langle f_1,\ldots, f_m\rangle \subset \Z[x]$. Let $\tilde I = \langle
f_1,\ldots, f_m\rangle \subset \Q[x]$. If the standard basis $G = \langle 1
\rangle $ for $\tilde I$ w.r.t. $<$ then there exists a constant $c \in \Z$
such that $c \in I$.
\label{lem:syz-coeff}
\end{lemma}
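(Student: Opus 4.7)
The plan is to extract an explicit representation of $1$ in the $\Q$-ideal $\tilde I$ and then clear denominators to land inside the $\Z$-ideal $I$.

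First I would translate the hypothesis. Saying that the standard basis of $\tilde I$ equals $\langle 1 \rangle$ means $1 \in \tilde I$, so there exist $h_1,\ldots,h_m \in \Q[x]$ with
\[
1 = \sum_{i=1}^{m} h_i f_i.
\]
This is just the definition of membership in $\tilde I$; the standard-basis hypothesis supplies both the existence of the representation and (algorithmically) its effective computability, but only existence is needed for the lemma.

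Next I would clear denominators. Let $c \in \Z_{>0}$ be the least common multiple of all denominators appearing among the rational coefficients of $h_1,\ldots,h_m$. Then $c\, h_i \in \Z[x]$ for every $i$, and multiplying the displayed equation by $c$ yields
\[
c = \sum_{i=1}^{m} (c\, h_i)\, f_i,
\]
which exhibits $c$ as a $\Z[x]$-linear combination of $f_1,\ldots,f_m$. Hence $c \in I$, and $c$ is a nonzero integer constant, as required.

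There is no genuine obstacle here, since the argument is essentially the standard denominator-clearing trick. The one point worth flagging is that the lemma only guarantees the existence of some $c \in \Z \cap I$; it does not pin down the smallest such $c$, and in practice the constant produced by naive denominator clearing can be much larger than the generator of the principal ideal $I \cap \Z$. Any refinement (for instance, to use this constant effectively in the setting described around Algorithm~\ref{alg:bba}) would therefore need a separate discussion.
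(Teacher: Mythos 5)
Your proof is correct and is essentially the same as the paper's: both extract a representation $1 = \sum h_i f_i$ with $h_i \in \Q[x]$ from $1 \in \tilde I$ and then clear denominators to obtain $c = \sum (c\,h_i) f_i \in I$. The paper merely phrases the first step in the language of a syzygy of the augmented ideal $\langle 1, f_1, \ldots, f_m \rangle$ (because Algorithm~\ref{alg:rpc} actually computes that syzygy module), but the underlying argument and the denominator-clearing step are identical to yours.
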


\begin{proof}
Let $G = \langle 1 \rangle \subset \Q[X]$. Consider $J = \langle 1, f_1,\ldots,
f_m\rangle \subset \Q[x]$. Consider the free module $\Q[x]^{m+1}$ with
standard generators $e_0,\ldots,e_m$ together
with the map $\pi: \Q[x]^{m+1} \rightarrow \Q[x]$ defined via $e_0 \mapsto
1$ and $e_i \mapsto f_i$ for all $1 \leq i \leq m$.
Since $1 \in G$ there must exist a syzygy $\sigma \in \Q[x]^{m+1}$ of the
following structure
\[ \sigma = e_0 + \sum_{i=1}^m p_i e_i\]
where $p_i \in \Q[x]$ are polynomials for all $1\leq i \leq m$.
In other words, we can represent $1 = \pi(e_0)$ as a $\Q[x]$-linear
combination of the $f_i = \pi(e_i)$. Moreover, we define
\[c := \lcm\left(\text{all denominators of all
coefficients of all terms of all $p_i$}\right).\]
Thus we get another syzygy $c\sigma$ which corresponds to the equation
\[c = c \cdot 1 = c \cdot \pi(e_0) =\sum_{i=1}^m (c p_i)\cdot \pi(e_i) =
\sum_{i=1}^m (c p_i) \cdot f_i\]
where $c p_i \in \Z[x]$ for all $1 \leq i \leq m$.
By construction it follows that $c \in I$.
\end{proof}

\begin{algorithm}
\caption{RationlPreCheck (\rpc)} 
\label{alg:rpc}
\begin{algorithmic}[1]
\Require{Ideal $I=\langle f_1,\ldots,f_m\rangle \subset \epr$, monomial order
  $<$}
\Ensure{Ideal $J$ such that $J = I$}
\State{$G \gets$ \stb for $\langle f_1,\ldots, f_m\rangle$ in $\Q[x]$ w.r.t. $<$}
\State{$S \gets \syz{\langle 1, f_1,\ldots, f_m\rangle} \subset \sum_{i=0}^m\Q[x]
  e_i$ w.r.t. $<$}
\If{$1 \in G$}
\State{Search for $\sigma \in S$ with $0$th component of the form $c e_0$, $c
  \in \Q$.}
\State{Find multiple $\lambda \in \Z$ such that $\lambda \sigma \in \sum_{i=0}^m
  \Z[x]e_i$}
\State{$J \gets \langle \lambda c, f_1, \ldots, f_m\rangle$}
\EndIf
\State{$\text{\textbf{return }}J$}
\end{algorithmic}
\end{algorithm}

\begin{example}
We give two examples, a small one we do by hand and a bigger one which gives a
real benefit for the overall computational time:
\begin{enumerate}
\item Let $I \subset \Z[x,y]$ be given by $I=\langle x+4, xy+9, x-y+8\rangle$.
We want to compute a strong \stb for $I$ w.r.t. the degree reverse-lexicographical
order $<$. The \stb for $I$ over $\Q$ includes $1$, so we have a constant in the
\stb for $I$ over $\Z$. We compute $\syz{\langle 1, x+4, xy+9, x-y+8\rangle}
\subset \sum_{i=0}^3 \Q[x,y]$ and get the following three syzygies:
\begin{center}
$
\begin{array}{rcl}
\sigma_1 & = & 7 e_0 - (x+4) e_1 + e_2 + x e_3,\\
\sigma_2 & = & (y-4) e_0 - e_1 + x e_3,\\
\sigma_3 & = & (x+4) e_0 - e_1.
\end{array}
$
\end{center}
$\sigma_1$ is the relation from which we can extract the corresponding constant
for $I$:
\begin{center}
$
\begin{array}{rcl}
7 = 7 \pi(e_0) &=& (x+4) \pi(e_1) - \pi(e_2) - x \pi(e_3)\\
  &=& (x+4)(x+4) - (xy+9) - x(x-y+8)\\
  &=& x^2+8x+16 - xy -9 - x^2 + xy -8x.
\end{array}
$
\end{center}
Thus, we add $7$ to the initial generators of $I$ and run
Algorithm~\ref{alg:bba}. We receive a strong \stb $G = \{7, x+4, y-4\} \subset
\Z[x,y]$ for $I$.
\item A bigger example is given as \texttt{rationalPreCheckExample()} in our
publicly available benchmark library~(\cite{singular-benchmarks}): The ideal $I$
we are considering is generated by $70$ polynomials in $\Z[x,y,z]$. We want to
compute a strong \stb for $I$ w.r.t. the degree reverse-lexicographical order $<$.
In Table~\ref{table:syz-example} give some characteristics of the computation
of a \stb for $I$ on an Intel Core
i7-5557U CPU with 16 GB RAM. Note that the computation of
Algorithm~\ref{alg:rpc} over $\Q$ takes $<0.01$ seconds and needs $<0.3$ MB of
memory, so it is
negligible compared to the computational cost over $\Z$. The strong \stb for $I$ computed by
our implementation consists of only $9$ elements
\begin{align*}
G = \{&18, 6z-12, 2y-4, 2z2+4z+8,\\
     &yz+z+3, 3x2z-15x2, x2y+3x2z+x2,\\
     &x3+10z, x2z2-4x2z-11x2\}
\end{align*}
From Algorithm~\ref{alg:rpc} we do not directly get the constant $18 \in G$, but
we get a multiple of it: $6,133,248$. Adding this constant to the generators of
$I$ and applying Algorithm~\ref{alg:bba} represents the third column of
Table~\ref{table:syz-example}, whereas a direct application of
Algorithm~\ref{alg:bba} on $I$ is given in column two.
\end{enumerate}
\end{example}

\begin{table}[h]
	\centering
  \def\arraystretch{1.2}
  %\scalebox{0.75}{
  % \vspace*{-3mm}
    \begin{tabular}{c||c|c}
    \toprule
    \multicolumn{1}{c||}{\textbf{Characteristics $\setminus$ Algorithms}} &
    \multicolumn{1}{c|}{\sbba} &
    \multicolumn{1}{c}{\rpc + \sbba}\\
    \midrule
    \text{maximal degree} & 13 & 13\\
    \text{\# zero reductions} & 1,130 & 795\\
    \text{\# product / chain criteria} & 1,279 / 2,990 & 826 / 1,925\\
    \text{memory usage (in MB)} & 1.51 & 0.78\\
    \bottomrule
    \end{tabular}
	\caption{Characteristics of \stb computations of Example~\ref{sec:syz-example}}
	\label{table:syz-example}
\end{table}
\begin{remark} \
\begin{enumerate}
\item Clearly, one can generalize Algorithm~\ref{alg:rpc}: If we do not have $1 \in G$
over $\Q$ we might still get a short polynomial, even a monomial whose
corresponding $\Z[x]$ representation can then be recovered from the corresponding
syzygy module. Note that in this case the reconstruction is a bit harder and the
precheck might take longer, since we have a more complex \stb
computation over $\Q$ that does not terminate early.
\item If one has a computer with at least two cores available the usage of
\texttt{parallel.lib} resp. \texttt{task.lib}~(\cite{singular-parallel,singular-tasks})
in \singular might be worthwhile: One could start the
direct computation of \sbba over $\Z$ on one core plus \rpc over $\Q$ on the
other core. Using the \texttt{waitfirst} command one could always ensure that the
fatest possible running time is achieved.
\end{enumerate}
\end{remark}

\section{Normal Form Computations}
\label{sec:nf}
Let us recall Definition~\ref{def:reduction} and give the two different
types of a reduction step a name:

\begin{definition}
Let $f \in \epr$ and let $G \subset \epr$ be a finite subset.
\begin{enumerate}
\item If there exists $g\in G$ such that $\hm g \mid \hm f$ and $\lc g \mid \lc
f$ then $f - \frac{\lc f}{\lc g} \frac{\hm f}{\hm g} g$ is a \emph{top-\ltr} of
$f$ (w.r.t. $g$).
\item If there exists $g\in G$ such that $\hm g \mid \hm f$, $\lc g \prec \lc f$ and $\lc g \nmid \lc
f$ then $f - a \frac{\hm f}{\hm g} g$ with $\lc f = a\, \lc g + b$ where $a,b
\in \Z$, $b \neq 0$ amd $b \prec \lc f$ is a \emph{top-\lcr} of $f$ (w.r.t. $g$).
\end{enumerate}
\end{definition}

First we can note that it is enough to consider \ltrs since \lcrs are taken care
of when generating new pairs:

\begin{lemma}
\label{lem:ltrs-are-enough}
Algorithm~\ref{alg:bba} terminates and computes a correct strong \stb for a
given set of geneators and a given momonmial order if we change the
corresponding normal form algorithms to consider only \ltrs.
\end{lemma}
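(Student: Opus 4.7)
The plan is to verify termination and correctness of Algorithm~\ref{alg:bba} under the modified rule that \nfn performs only top-\ltrs, reducing correctness to condition~(2) of Theorem~\ref{thm:buchberger-criterion}.

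For correctness, consider any \spt or \gpt $p$ that gets popped from $P$ during execution, and let $p = h_0, h_1, \dots, h_k$ be the sequence produced by the modified \nfn. Each step has the form $h_i = h_{i-1} - c_i m_i g_i$ with $g_i \in G$ and $\hm{c_i m_i g_i} = \hm{h_{i-1}}$. Because an \ltr requires $\lc{g_i} \mid \lc{h_{i-1}}$, the lead term is cancelled exactly, forcing $\hm{h_k} < \hm{h_{k-1}} < \cdots < \hm{h_0}$. If $h_k = 0$, then $p = \sum_i c_i m_i g_i$ is already a strong standard representation. If $h_k \neq 0$, then $h_k$ is appended to $G$ and $p = \sum_i c_i m_i g_i + 1 \cdot h_k$ is a strong standard representation w.r.t.\ the enlarged $G$, since the unique summand achieving the maximal lead monomial is $c_1 m_1 g_1$ (or $h_k$ itself if $k=0$). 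Each insertion also schedules all new spairs and gpairs, so at termination every \spt and \gpt of elements of the final $G$ admits a strong standard representation, which is condition~(2) of Theorem~\ref{thm:buchberger-criterion}.

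For termination, the inner \nfn loop halts because each top-\ltr step strictly decreases the lead monomial, so no infinite descending chain is possible for a global $<$; in the local case, Mora's ecart bookkeeping from~\cite{mora82} carries over unchanged, because forbidding lc-reductions only restricts the set of admissible reductions and cannot introduce new cycles. For the outer loop I would use a two-level Noetherian argument: Dickson's lemma bounds the number of times the monomial ideal $\langle \hm g \mid g \in G \rangle$ can strictly grow, and once it stabilizes every newly added $h$ has $\hm h$ divisible by some existing $\hm g$, so the obstruction to \ltr-reducing $h$ must lie in the lead coefficient. The subsequent \gpts then produce a $\prec$-descending chain of lead coefficients at each fixed lead monomial, and this chain terminates because $\prec$ is the well-order induced by the Euclidean norm.

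The principal obstacle I expect is the subtlety flagged by Example~\ref{ex:stronggb}: the output of the modified \nfn can satisfy $\hd h \in L(G)$ already (as with $h = x$ added to $G = \{2x,3x\}$), so the naive Noetherian-chain argument on $L(G)$ does not suffice and must be refined to track the pair $(\hm g, \lc g)$ rather than just $\hd g$. Verifying that this finer invariant strictly improves on every iteration, and that the bookkeeping meshes correctly with Mora's ecart argument in the local case, will be the most delicate part of the proof.
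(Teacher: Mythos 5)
Your route differs substantially from the paper's. The paper argues by simulation: it identifies, for each \lcr step that the unrestricted \nfn would have taken, the \gpt $\gpoly h g$ that Algorithm~\ref{alg:bba} creates once $h$ joins $G$, and shows by a two-case analysis (on whether the gcd cofactor $a$ equals $1$) plus a finite exchange chain through $\spoly g{g'}$ or $\gpoly g{g'}$ that the omitted reduction is recovered there. You instead re-verify Buchberger's criterion (Theorem~\ref{thm:buchberger-criterion}) directly, showing that every popped $p$ ends with a strong standard representation after pure \ltrs because the lead monomial strictly drops at each step. That correctness argument is sound and arguably cleaner than the paper's bookkeeping.

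Your termination argument, however, has a genuine gap. The step that the subsequent \gpts ``produce a $\prec$-descending chain of lead coefficients at each fixed lead monomial'' controls what happens at one monomial, but once $\langle\hm g : g\in G\rangle$ stabilizes, a newly added $h$ can still have any of the infinitely many lead monomials lying inside that stabilized monomial ideal, so your two-level argument never bounds the total number of additions. The refinement you gesture at --- tracking $(\hm g,\lc g)$ in place of $\hd g$ --- is also not enough as stated: divisibility on $\Z$ has infinite antichains (e.g.\ the primes), so strict improvement of that pair alone cannot force termination. A self-contained termination proof must somewhere use that a new $h$ arises as (the \ltr of) an \spt or \gpt of earlier basis elements rather than being an arbitrary polynomial that enlarges $G$; the paper's proof avoids all of this by, in effect, inheriting termination from the unrestricted algorithm it simulates, a shortcut that your from-scratch re-verification of the criterion does not automatically provide.
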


\begin{proof}
We need to show that \lcrs are considered when adding new \gpts to the pair set
$P$. Assume $h$ is the outcome of an \ltr. If there exist possible \lcrs for $h$
w.r.t. $G$ then there is a $g \in G$ such that $\hm g \mid \hm h$, $\lc g
\nmid \lc h$ and $\lc h \nmid \lc g$. Thus $\gpoly h g$ is generated:
\[\gpoly h g = a h + b t g\]
where $a, b \in \Z$ such that $\gcd\left(\lc h, \lc g\right) = a \lc f + b \lc
g$ and $t = \frac{\hm h}{\hm g}$. Now we distinguish two cases:
\begin{enumerate}
\item\label{first-case} If $a=1$ then $\gpoly h g = h - b t g$ which is exactly the corresponding
\lcr of $h$ w.r.t. $g$.
\item If $a \neq 1$ then the \lcr of $h$ w.r.t. $g$, $h' = h - ctg$ for some
$c\in \Z, t\in\epr$, coresponds to the first step
of the Euclidean algorithm calculating $\gcd\left(\lc h, \lc g\right)$. If there
is no further reduction of $h'$ then Algorithm~\ref{alg:bba} generates a corresponding
\gpt between $h'$ and $g$. It follows that
\[h - ctg + \gpoly {h'} g = \gpoly h g.\]
If $h'$ is further reducible by some $g' \in G$ then we note the following:
First of all $g' \in G\setminus \{g\}$ since $\lc g \succ \lc{h'}$ thus there
cannot exist a \ltr or \lcr of $h'$ w.r.t. $g$. Now the reduction of $h'$ by
$g'$ is given as
\[h' - c't'g' = h - ctg - c't'g'\]
for some $c' \in \Z$ and $t' \in \epr$. Since $\hm h = \hm{tg} = \hm{t'g'}$ we
conclude that $ctg + c't'g'$ corresponds to a multiple of either $\spoly
{g}{g'}$ or $\gpoly {g}{g'}$ depending on divisibility of $\lc g$ and $\lc{g'}$.
Nonetheless, once we have a standard representation for $\spoly{g}{g'}$ or
$\gpoly{g}{g'}$ we can reset $h$ by $h - ctg - c't'g'$. Either the lead term or
the lead coefficient of $h$ increases in this process. Thus after finitely many
steps this process terminates and we reach either case~(\ref{first-case}) or
there is no further \lcr reduction possible.
\end{enumerate}
\end{proof}

This was the usual way \singular implemented standard basis reduction over the
integers. Clearly, this is due to historical reasons where the implementation
over $\Z$ was only thought of as a slight generalization of the computation
over fields.

\begin{example}
Recall Example~\ref{ex:local-order} from Section~\ref{sec:notation}: If we
allow only \ltrs when reducing $h = 4+x$ w.r.t. the strong \stb $G = \left\{2-x+y+x^2,
x-2y-x^2-xy-x^3\right\}$ then we get the following reduction table with $T_h =
G$ at the beginning.\begin{table}[h]
  \centering
  \def\arraystretch{1.2}
  %\scalebox{0.75}{
  % \vspace*{-3mm}
    \begin{tabular}{c|c|c}
    \toprule
    \multicolumn{1}{c|}{$h$} &
    \multicolumn{1}{c|}{$g$} &
    \multicolumn{1}{c}{$h$ \emph{added to} $T_h$?}\\
    \midrule
    $4+x$ & $2-x+y+x^2$ & \checkmark\\
    $3x-2y-2x^2$ & $x-2y-x^2-xy-x^3$ & \checkmark \\
    $4y+x^2+3xy+3x^3$ & $4+x$ & - \\
    $x^2+2xy+3x^3$ & $x-2y-x^2-xy-x^3$ & \checkmark \\
    $4xy+4x^3+x^2y+x^4$ & $4+x$ & - \\
    $4x^3+x^4$ & $4+x$ & -\\
    $0$ & - & -\\
    \bottomrule
    \end{tabular}
    \vspace*{5mm}
  \caption{Only \ltrs used in Mora normal form}
  \label{table:infinite-nf-only-lcrs}
\end{table}

If we compare it to Table~\ref{table:infinite-nf} we see one more reduction step
introduced by choosing $x-2y-x^2-xy-x^3$ as reducer in the second reduction
step, since \lcrs are not allowed.
\end{example}

% \todo[inline]{Is it always possible with reduction defined as in
% Definition~\ref{def:reduction} that we first do the lead term reductions and
% then the lead coefficient reductions?}
Note that Lemma~\ref{lem:ltrs-are-enough} shows that \lcrs are, from the
theoretical point of view, not needed. A \lcr corresponds
to a first step in the Euclidean algorithm when calculating $\gcd\left(\lc f, \lc
g\right)$ which will be done in the algorithm when considering $\gpoly fg$.
Still, it has a strong impact on the performance of the algorithm in practice:
Cutting the lead coefficient down as much as possible means that the element
might be used more often for reduction purposes. Moreover, generating new \spts
and \gpts with it leads to lower lead coefficients there. In some sense \lcrs are
the bridge between an \ltr of $f$ by $g$ and the $\gpoly fg$. In one specific
situation we can go directly from one to another, without the need of a bridge. 
Lemma~\ref{lem:ltrs-are-enough} also gives a hint to this situation stated in
the following.
\footnote{This idea is also implemented in the computer algebra system
\macaulay. We have discovered it independently and since we have not
found any proof for the statement we give one here.}
during the reduction process over the integers.
\begin{lemma}
Let $f \in \epr$, $G \subset \epr$ finite and $g\in G$ such that
$\hm g = \hm f$. Then it holds that
\[\left\langle f,g\right\rangle
  = \left\langle \spoly fg, \gpoly fg\right\rangle.\]
  \label{lem:m2-replace-trick}
\end{lemma}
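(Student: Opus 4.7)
The forward inclusion $\langle \spoly fg, \gpoly fg\rangle \subseteq \langle f,g\rangle$ is immediate from the definitions, since both $\spoly fg$ and $\gpoly fg$ are explicit $\er$-linear combinations of $f$ and $g$. So the plan is to establish the reverse inclusion by showing that $f$ and $g$ themselves can be recovered as $\er$-linear combinations of $\spoly fg$ and $\gpoly fg$.

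The key simplification is that $\hm f = \hm g$ forces $t_f = t_g = 1$ in Definition~\ref{def:spoly}, so the two special polynomials take the clean form
\[\spoly fg = \alpha f - \beta g,\qquad \gpoly fg = \gamma f + \delta g,\]
where $\alpha = \lcm(\lc f,\lc g)/\lc f$, $\beta = \lcm(\lc f,\lc g)/\lc g$, and $\gamma,\delta$ are Bézout coefficients satisfying $\gamma\lc f + \delta\lc g = \gcd(\lc f,\lc g)$. (Note that the names used in Definition~\ref{def:spoly} pair them oppositely, but the substance is the same.) I would then assemble these two equations into the matrix identity
\[\begin{pmatrix}\spoly fg\\ \gpoly fg\end{pmatrix}=\underbrace{\begin{pmatrix}\alpha & -\beta\\ \gamma & \delta\end{pmatrix}}_{=:M}\begin{pmatrix}f\\ g\end{pmatrix}.\]

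The main step is to compute
\[\det M = \alpha\delta + \beta\gamma = \frac{\lcm(\lc f,\lc g)\,\bigl(\gamma\lc f + \delta\lc g\bigr)}{\lc f\,\lc g} = \frac{\lcm(\lc f,\lc g)\cdot\gcd(\lc f,\lc g)}{\lc f\,\lc g} = 1,\]
using the universal identity $\lcm\cdot\gcd = \text{product}$ in the Euclidean domain \er. Thus $M$ is invertible over \er with $M^{-1} = \bigl(\begin{smallmatrix}\delta & \beta\\ -\gamma & \alpha\end{smallmatrix}\bigr)$, which yields the explicit recovery formulas $f = \delta\,\spoly fg + \beta\,\gpoly fg$ and $g = -\gamma\,\spoly fg + \alpha\,\gpoly fg$, establishing $\langle f,g\rangle \subseteq \langle \spoly fg,\gpoly fg\rangle$ and finishing the proof.

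No step is really an obstacle; the only subtle point is checking that the unimodularity computation is independent of the particular Bézout choice $(\gamma,\delta)$ (which was flagged as non-unique in Remark~\ref{rem:gpairs}), but the determinant computation only uses the defining relation $\gamma\lc f+\delta\lc g = \gcd(\lc f,\lc g)$ and so holds for any valid choice. This is the reason the replacement of $\{f,g\}$ by $\{\spoly fg,\gpoly fg\}$ during reduction preserves the ideal exactly, without introducing or losing any information.
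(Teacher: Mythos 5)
Your proof is correct and uses essentially the same argument as the paper: write $\bigl(\spoly fg,\gpoly fg\bigr)$ as a $2\times 2$ matrix times $(f,g)$ and check that the determinant is a unit of \er, so the transformation is invertible over \er and hence preserves the generated module (and ideal). The only cosmetic differences are the algebraic route to the determinant (you invoke $\lcm\cdot\gcd = \lc f\,\lc g$, while the paper substitutes $\lcm(\lc f,\lc g)/\lc f = \lc g/d$ and cancels $d$ directly) and the sign, which is $\pm 1$ depending on normalization conventions for $\gcd$ and $\lcm$; either way it is a unit, which is all that is needed.
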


\begin{proof}
Let $u,v, d \in \Z$ such that
\begin{equation}
u\, \lc f + v\, \lc g = d = \gcd\left(\lc f, \lc g\right).
\label{eq:gcd}
\end{equation}
We can then write
\begin{center}
$
\begin{array}[]{rcccccc}
\gpoly fg &= & u& f& + &v& g.\\
\spoly fg &=& \frac{\lc g}{d}& f& - &\frac{\lc f}{d}&g.
\end{array}
$
\end{center}
In order to show the statement we have to proof that $(f,g)$ and $\left(\gpoly fg,
\spoly fg\right)$ generate the same $\Z$-lattice. So, in the above
representation of $\gpoly fg$ and $\spoly fg$ in terms of $f$ and $g$ we have to
show that the corresponding coefficient matrix is invertible, i.e. has
determinante $\pm 1 \in \Z$:
To see this we set
\[M := \begin{pmatrix} u & v\\ \frac{\lc g}{d} & -\frac{\lc
  f}{d}\end{pmatrix}.\]
Finally, we compute
\[\det\left(M\right) = - u\, \frac{\lc f}{d}
- v\, \frac{\lc g}{d} = -\frac 1d \left(u\, \lc f + v\, \lc g\right)
  \stackrel{(\ref{eq:gcd})}{=} -1.\]
\end{proof}

How to use Lemma~\ref{lem:m2-replace-trick} in \bba ? The idea is that whenever
we reduce a new element $f$ we check if there exists a reducer $g\in G$ with
$\hm f = \hm g$, but $\lc g \nmid \lc f$. In this situation we do a $2$-by-$2$
replacement:
\begin{enumerate}
\item Compute $\gpoly fg$ and replace $g\in G$ by $\gpoly fg \in G$. Clearly,
already genrated pairs with $g$ as generator have to adjusted respectively.
\item Compute $\spoly fg$ and replace $f$ by $\spoly fg$. Note that we have not
changed the degree of $f$, but probably only multiplied $f$ with some
coefficient. With the newly defined $f$ we again enter the reduction process and
see, if we can further reducer it.
\end{enumerate}
This has two main advantages to the usual reduction process that would compute
only an \lcr of $f$ w.r.t. $g$:
\begin{enumerate}
\item We directly compute the \gpt of $f$ and $g$
whereas the before mentioned \lcr would represent only one step in the Euclidean
algorithm for reaching $\gpoly fg$. So we can directly replace $g$ with
$\gpoly fg$ which leads to smaller coefficients and multiples during the pair
generation. Furthermore, $\gpoly fg$ reduces other elements at least in all
situations $g$ would reduce, but it can possibly fulfill more \ltrs due to its
smaller lead coefficient.

Furthermore, since $\hm{\gpoly fg} = \hm g$ we can replace all \spts already generated
with $g$, again giving smaller coefficients in upcoming reduction processes.
Even more, using $\gpoly fg$ we are possibly able to render more \spts useless
applying the chain criterion, due to the smaller lead coefficient.
\item For $f$ the advantage is that we are not stuck with a \lcr only, but we
can go on and perform the \ltr $\spoly fg$ and thus directly lower the lead term
without the need of adding $f$ to the basis, which would blowi up pair
generation.
\end{enumerate}

\begin{remark} \
\begin{enumerate}
\item Note that $\hm f = \hm g$ is an essential condition for the correctness of
Lemma~\ref{lem:m2-replace-trick}. If, for example, only $\hm g \mid \hm f$
holds such that $\lambda = \frac{\hm f}{\hm g} > 1 \in \epr$, then we can only
recover $f$ and $\lambda g$ via $\spoly fg$ and $\gpoly fg$, but we are no
longer able to recover $g$.
\item  Overall applying \lcrs has a huge effect on running time: In most examples we
    get a speedup factor of $3$. If we even apply coefficient reductions to the
    tail terms of the newly added element to the basis, we get another factor of
    $3-5$.
% \item Lemma~\ref{lem:m2-replace-tricks} cannot be applied as often as \lcrs, but
% it helps in several ways:
% \begin{enumerate}
% \item We directly get the smallest possible lead coefficient for $f$ and $g$ and
%     not only a slightly reduced one.
% \item We can get rid of a lot of \spats and \gpats since we get as lead
% coefficient the gcd of $\lc f$ and $\lc g$ which helps the chain criterion to
% remove useless data.
% \end{enumerate}
\end{enumerate}
\end{remark}

When we enter the reduction process of an element $f$ in \bba we search for
reducers in the following order:
\begin{enumerate}
\item Is there $g \in G$ for a \ltr of $f$? If so we can cut down the lead
term of $f$ without the need of multiplying $f$ with any coefficient $\neq 1$.
\item Is there are $g \in G$ fulfilling Lemma~\ref{lem:m2-replace-trick}? If so
we can cut down the lead term of some coefficient multiple of $f$ and we can
further replace $g$ by $\gpoly fg$ leading to a better reducer.
\item Is there $g \in G$ for a \lcr of $f$? We cannot cut down the lead term of
$f$, but at least we can reduce the lead coefficient before adding $f$ to $G$
and generating new \spts and \gpts.
\end{enumerate}

 \section{Computational results}
\label{sec:results}
In this section we present the new implementation for standard basis computation
over Euclidean domains in \singular $4-1-2$.\footnote{In the \singular
sources since git commit 7d2091affbf4b4a1a382e5eb0a47f66c0f3c42f7a.}
We compare it to the current implementations in the
computer algebra systems \macaulay (version $1.12$) and \magma (version $2.23$).
For the comparison we use benchmarks with different properties and behaviours.
All examples are computed with respect to the degree reverse lexicographical order.
All algorithms run single threaded, we use an Intel Core i7-6700 CPU with $3.4$ GHz
and $64$GB RAM. The machine runs Arch Linux with unmodified 4.18.12 kernel. For
the examples we refer to~\cite{singular-benchmarks}.

\begin{table}[h]
	\centering
  \def\arraystretch{1.2}
  %\scalebox{0.75}{
  % \vspace*{-3mm}
    \begin{tabular}{c||r|r|r|r}
    \toprule
    \multicolumn{1}{c||}{\textbf{Examples}} &
    \multicolumn{1}{c|}{\singular (Thm.~\ref{thm:buchberger-criterion})} &
    \multicolumn{1}{c|}{\singular (all pairs)} &
    \multicolumn{1}{c|}{\macaulay} &
    \multicolumn{1}{c}{\magma}\\
    \midrule
    Cyclic-6 & $0.330$ & $0.320$ & $4.708$ & $2.799$\\
    Cyclic-7 & $18,731.820$ & $5,636.210$ & out of memory & $366.060$\\[0.2em]
    Katsura-7 & $2.200$ & $2.250$ & $204.928$ & $251.630$\\
    Katsura-8 & $133.390$ & $135.360$ & $64,555.420$ & ($>24$h)\\
    Katsura-9 & $13,366.590$ & $12,951.160$ & ($>24$h) & ($>24$h)\\[0.2em]
    Eco-9 & $3.920$ & $4.050$ & $870.409$ & $22.520$\\
    Eco-10 & $38.760$ & $40.670$ & ($>24$h) & $289.540$\\[0.2em]
    F-633 & $0.140$ & $0.120$ &$14.982$ & $12.880$\\
    F-744 & $118.610$ & $117.890$ & ($>24$h) & ($>24$h) \\[0.2em]
    Noon-7 & $34.930$ & $32.700$ & ($>24$h) & ($>24$h)\\
    Noon-8 & $3,1390.060$ & $3,079.370$ & ($>24$h) & ($>24$h)\\[0.2em]
    Reimer-5 & $3.620$ & $3.590$ & out of memory & $1,932.400$\\
    Reimer-6 & $1,216.960$ & $1,232.530$ & out of memory & ($>24$h)\\[0.2em]
    Lichtblau & $1.910$ & $1.830$ & $69.536$ & $2,242.900$\\[0.2em]
    Bayes-148 & $9.970$ & $9.900$ & $117.635$ & $46.240$\\[0.2em]
    Mayr-42 & $212.320$ & $212.770$ & $218.635$ & $40.270$\\[0.2em]
    Yang-1 & $149.120$ & $147.250$ & $181.210$ & $50.330$\\[0.2em]
    Jason-210 & $47.010$ & $46.780$ & ($>24$h) & ($>24$h)\\
    \bottomrule
    \end{tabular}
	\caption{Benchmark timings given in seconds (``($>24$h)'' means that we have stopped the computation
      after at least $24$ hours.)}
	\label{table:syz-example}
\end{table}

We can see that \singular's new implementation is always faster than \macaulay.
Comparing it to \magma we see that \magma is way faster for \texttt{Cyclic-7}.
We assume that this is due to \magma using Faug\`ere's F4
algorithm~(\cite{fF41999}): Our implementation considers less \spts and \gpts, but
the reduction steps in higher degree are way slower than the linear algebra done
in \magma. We believe that there are more classes of examples where the linear
algebra attempt is more efficient than the polynomial arithmetic used in
\singular, still, we need to investigate this problem in more detail. We can see
that the F4 algorithm seems to be beneficial also for \texttt{Mayr-42} and \texttt{Yang-1}.
Nevetheless, for most of the other examples considered, \singular is by a
factor faster than \magma.

In the above table we list timings for \macaulay's Buchberger implementation
using polynomial arithmetic (as for \singular).
We also tested \macaulay's F4 implementation, but in most of the above examples
it was slower or just as fast as the polynomial arithmetic implementation. Due
to its much higher memory usage, we could not finish most of the examples on the
given machine. The only example where we have seen a better result was Mayr-42
where \macaulay's F4 algorithm finished in $185$ seconds, still using way more
memory than \magma and nearly $10$ times as much as \singular. In the given
example \singular's new algorithm uses, aside from \texttt{Cyclic-7}, the least memory
of all compared implementations.

For \singular we can see that generating all possible \spts and \gpts or only
the ones needed (recall Theorem~\ref{thm:buchberger-criterion}) does not have a
bigger influence on the computation for most of the examples.
Still, for the bigger examples like \texttt{Noon-8} or \texttt{Katsura-9} we see that applying
Theorem~\ref{thm:buchberger-criterion} leads to slightly slower computation. In
\texttt{Cyclic-7} we can even see a huge impact, the computation slows down by a factor
of more than $3$! It sems that having more pairs available at an earlier stage
of the algorithm is advantageous compared to having less pairs overall. In most
of the examples the algorithm taking care of all possible pairs does not even
compute more reductions, the product and chain criterion removes those
pairs which are really useless at some later stage. All in all it seems that
considering all possible \spts and \gpts leads to a more stable algorithm.

We found that the application of Lemma~\ref{lem:m2-replace-trick} becomes
sometimes a bottleneck. For example, always exchanging $f$ and $g$ by $\spoly
fg$ and $\gpoly fg$ has a huge drawback in the computation of a basis for
\texttt{Cyclic-7}, leading to a slow down of a factor of more than $3$. We found
that overall it is a good heuristic to apply Lemma~\ref{lem:m2-replace-trick} at
most $5$ times per a single reduction process. The \singular timings in the
above table correspond exactly this implementation.

 \section{Conclusion}
We have presented new ideas for computing standard bases over Euclidean domains
without zero divisors.
The implementation of the corresponding algorithms is available in \singular. We
have seen that \singular is in general faster than \macaulay and \magma in
various examples.

Our next steps include an implementation of the new ideas in the open source C library
\gbl which implements Faug\`ere's F4 algorithm~(\cite{gbl}). Doing so we hope to
benefit from the new ideas and the fast linear algebra. Still, not all ideas
presented here are trivial to move to an F4-style algorithm.

Moreover, we still see a lot of zero reductions in higher degree which slow down
the computation. In order to tackle this problem, we work on a more general chain
criterion trying to exploit more of the structure of the input system. Even
more, a further attempt on signature-based computation over Euclidean rings,
see~(\cite{eppSigZ2017}), should be possible.

Finding better heuristics for the application of
Lemma~\ref{lem:m2-replace-trick} depending on the structure of the input systems
is also an interesting topic to study further. If applied in a ``good'' way it
can have a strong impact on the overall computation.

Another topic we are working on is to improve the implementation in \singular
for Euclidean domains with zero divisors. There, special care needs to be
taken of the annihilator polynomials.

% \input{criteria}
% \input{interreduce}
% \input{gcd-reducer}
% \input{sig-drops}
% \input{hybrid}
% \input{timings}
% \input{finite-rings}
%\section*{Acknowledgements}
% \bibliographystyle{elsarticle-harv}
\bibliographystyle{abbrv}

% sigproc.bib is the name of the Bibliography in this case
% \bibliography{bib}
% \newpage
% \appendix
% \input{syz-example}
\end{document}